\theoremstyle{plain}
\newtheorem{theorem}{Theorem}
\newtheorem*{theorem*}{Theorem}
\newtheorem{corollary}[theorem]{Corollary}
\newtheorem*{lemma*}{Lemma}
\theoremstyle{definition}
\newtheorem{definition}{Definition}
\theoremstyle{remark}
\DeclareMathOperator{\FS}{\mathsf{FS}}
\DeclareMathOperator{\DFS}{\mathsf{DFS}}
\DeclareMathOperator{\Tour}{\mathsf{Tour}}
\DeclareMathOperator{\Path}{\mathsf{Path}}
\DeclareMathOperator{\Cycle}{\mathsf{Cycle}}
\DeclareMathOperator{\des}{\mathsf{Des}}
\DeclareMathOperator{\cdes}{\mathsf{CDes}}
\DeclareMathOperator{\outdeg}{\mathsf{outdeg}}
\DeclareMathOperator{\ODP}{\mathsf{ODP}}
\newcommand{\conj}[1]{\overline{#1}}
\title{Generalized Eulerian Numbers and Directed Friends-and-seats Graphs}
\author{David Dong}
\date{}
\begin{document}


\maketitle
\begin{abstract}
Let $A(n,m)$ denote the Eulerian numbers, which count the number of permutations on $[n]$ with exactly $m$ descents, or, due to the Foata transform, the number of permutations on $[n]$ with exactly $m$ excedances. Friends-and-seats graphs, also known as friends-and-strangers graphs, are a seemingly unrelated recent construction in graph theory. In this paper, we introduce directed friends-and-seats graphs and establish a connection between these graphs and a generalization of the Eulerian numbers. We use this connection to reprove and extend a Worpitzky-like identity on generalized Eulerian numbers.
\end{abstract}
\section{Introduction}\label{sec:intro}
The Eulerian numbers $A(n,m)$ are equal to the number of permutations of the numbers from $1$ to $n$ with exactly $m$ descents. It is well-known that $A(n,m)$ is also equal to the number of permutations of the numbers from $1$ to $n$ with exactly $m$ ascents, through a Foata transform \cite{Stanley}. 

Eulerian numbers are found in many different contexts. For instance, they and their generalizations come up in number theory, various combinatorial constructions, and simplicial complexes \cite{Petersen}. The Eulerian polynomials $A_n(x)$, which have the Eulerian numbers $A(n,m)$ as coefficients, also are involved in many identities. One of the most famous of these is Worpitzky's identity, which relates Eulerian polynomials to the sum of powers \cite{Graham}:
\[
\frac{A_{{n}}(x)}{(1-x)^{n+1}} = \sum_{m = 0}^{\infty} m^n x^m,
\]
which has multiple interesting combinatorial interpretations \cite{Spivey}.

Many different generalizations of Eulerian numbers and their properties have also been studied \cite{Me, Conger}. One particular generalization considers a graph $G$ with $n$ vertices labeled with the integers from $1$ to $n$ \cite{Barred}. Then, $G$-descents in a permutation are defined to be descents such that the two consecutive terms of the descent are also neighbors in the graph. If $G$ is a chordal graph, then a Worpitzky-like identity exists for the generalized Eulerian polynomial created by $G$.

We now move our attention to the \textit{friends-and-seats graph}, also known as \textit{friends-and-strangers graphs} in past papers \cite{Defant}. The name has been changed in this paper to be more fitting of their definition. These graphs are generated from any two graphs $X$ and $Y$ that both have $n$ vertices. We may treat each of the vertices of $Y$ as a person, such that two vertices in $Y$ are adjacent if and only if the corresponding two people are friends. Similarly, we can treat each of the vertices of $X$ as a seat, with two seats next to each other if and only if the corresponding vertices are adjacent.

The \textit{friends-and-seats graph} resulting from these two graphs, denoted $\FS(X,Y)$, is a graph with $n!$ vertices, where each vertex is labeled with a different bijection from the vertices of $X$ to the vertices of $Y$. Each vertex of a friends-and-seats graph can be thought of as an arrangement of friends sitting on seats. Say that two people are only allowed to swap seats if they are friends and the seats that they are next to each other. For every valid swapping of two friends, an edge is drawn between the vertices in the friends-and-seats graph corresponding to the arrangements of the friends and seats, before and after swapping.

The following is an example friends graph, labeled $X$, and seats graph, labeled $Y$.
\begin{figure}[h]
\vspace{-0.4cm}
\begin{center}
\includegraphics[width = 1.9in]{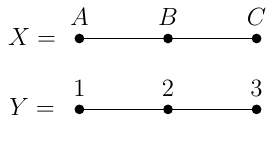}
\end{center}
\vspace{-0.5cm}
\captionsetup{justification=centering}
\caption{Example friends graph $(X)$ and seats graph $(Y)$, with people $A,B$ and $C$ and seats numbered from $1$ to $3$.}
\label{fig:fsinitial}
\end{figure}

\begin{figure}[ht!]
\vspace{-0.3cm}
\begin{center}
\includegraphics[width = 2.6in]{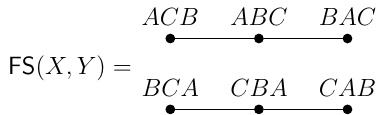}
\end{center}
\vspace{-0.3cm}
\captionsetup{justification=centering}
\caption{The resulting friends-and-seats graph. $ABC$ denotes that $A$ sits on chair 1, $B$ sits on chair 2, and $C$ sits on chair 3.}
\label{fig:fsfinal}

\end{figure}

Friends-and-seats graphs are relevant in many different contexts. As just one example, the friends-and-seats graph resulting from two complete graphs is isomorphic to the Cayley graph of the symmetric group generated by every transposition \cite{Defant}. They also connect with concepts from game theory. Each position of the famous 15-puzzle can be represented as a vertex of the friends-and-seats graph resulting from a $4$ by $4$ grid graph and a star graph. The fact that the resulting friends-and-seats graph is not connected proves that the 15-puzzle is not always solvable \cite{Defant}. As such, one well-studied property of friends-and-seats graphs is whether or not they are connected \cite{PRIMES22}.

Our paper is structured as follows. In Section~\ref{sec:Prelim}, we generalize friends-and-seats graphs into their directed version, and define the \textit{outdegree polynomial}, and motivate such a graph polynomial on directed friends-and-seats graphs. In Section~\ref{sec:genProp}, we prove general properties related to directed friends-and-seats graphs, and continue proving general properties related to the outdegree polynomial of these paths in \ref{sec:ODP Intro}. Finally, in Section~\ref{sec:ODP Examples}, we relate directed friends-and-seats graphs to Eulerian numbers, using them to prove a past theorem on generalized Eulerian numbers, and further extend them to prove a similar theorem on cyclic Eulerian numbers. 

\section{Preliminaries}\label{sec:Prelim}
In this section, we will introduce generalized Eulerian numbers and directed friends-and-seats graphs, and give motivation for defining the outdegree polynomial that connects the two. Then, we will list some general conventions and definitions that are relevant to our work, especially with regards to directed friends-and-seats graphs.

\subsection{Eulerian Numbers}

Let $S_n$ be the set of all bijections from $\{1,2,\ldots,n\}$ to $\{1,2,\ldots,n\}$. For any bijection $\sigma \in S_n$, define a \textit{descent} of $\sigma$ to be any integer $1 \leq i \leq n-1$ satisfying $\sigma(i) > \sigma(i+1)$. Furthermore, define an \textit{excedance} to be any integer $i \leq n$ satisfying $\sigma(i) > i$.

Formally define the Eulerian numbers $A(n,m)$ to be the number of bijections in $S_n$ with exactly $m$ descents. The corresponding Eulerian polynomials are defined to be
\[
A_{{n}}(x)=\sum _{{m=0}}^{{n}} A(n,m)\ x^{{m}}.
\]

We now formally define a generalization of the Eulerian numbers that was mentioned in the introduction \cite{Barred}. Consider any graph $G$ with $n$ vertices. For any bijection $\sigma$ sending $\{1,2,\ldots,n\}$ to $\{1,2,\ldots,n\}$, define the number of $G$-descents of $\sigma$, denoted $\des_G(\sigma)$, to be the number of positive integers $1 \leq i \leq n$ that satisfy the following conditions:
\begin{itemize}
\item The permutation $\sigma$ satisfies that $\sigma(i) > \sigma(i+1)$.
\item The vertices with labels $\sigma(i)$ and $\sigma(i+1)$ are neighbors in $G$.
\end{itemize} 
Recall that $S_n$ is the set of all bijections from $\{1,2,\ldots,n\}$ to $\{1,2,\ldots,n\}$. The generalized Eulerian polynomial $A_G(x)$ satisfies
\[
A_G(x) = \sum_{\sigma \in S_n} x^{\des_G(\sigma)}.
\]
Notably, if $G$ is a complete graph on $n$ vertices, $A_G(x)$ would be equal to the Eulerian polynomial $A_n(x)$. Define a graph to be chordal if every induced cycle has exactly three vertices. If $G$ is a chordal graph, then
\[
\frac{A_G(x)}{(1-x)^{n+1}} = (-1)^n \sum_{m=0}^{\infty} \chi_{\conj{G}}(-m-1) x^{m}, 
\]
where $\chi_{\conj{G}}(-m-1)$ is the chromatic polynomial of $G$, evaluated at $-m-1$. Notably, if $G$ is a complete graph, this statement is equivalent to Worpitzky's identity.

\subsection{Directed Friends-and-seats Graphs}
Recall the definition given for friends-and-seats graphs given in the introduction. To formalize this definition, we consider two graphs $X$ and $Y$ both with $n$ vertices. The friends-and-seats graph $\FS(X,Y)$ has $n!$ vertices, labeled with bijections from the vertices of $X$ to the vertices of $Y$. For any two bijections $\sigma$ and $\phi$, the vertices with labels $\sigma$ and $\phi$ in $\FS(X,Y)$ are connected by an edge if and only if there exist two vertices $a,b$ in $X$ that satisfy all of the following conditions:
\begin{itemize}
\item There is an edge between the vertices $a$ and $b$ in the graph $X$.
\item There is an edge between the vertices $\sigma(a)$ and $\sigma(b)$ in the graph $Y$.
\item The permutations satisfy that $\sigma (a) = \phi(b)$ and $\sigma (b) = \phi(a)$.
\item For all vertices $c$ in $X$ not equal to $a$ or $b$, we have $\sigma (c) = \sigma'(c)$.
\end{itemize}
The \textit{directed friends-and-seats graph} $\DFS(X,Y)$ directs every edge of a friends-and-seats graph. Let $X$ and $Y$ be arbitrary directed graphs with $n$ vertices. The graph $\DFS(X,Y)$ also has $n!$ vertices, each labeled with a different bijection between the vertices of $X$ and the vertices of $Y$. There is a directed edge between two vertices labeled with bijections $\sigma$ and $\phi$ in $\DFS(X,Y)$ if and only if there exist vertices $a$ and $b$ in $X$ that satisfy all of the following conditions:
\begin{itemize}
\item There is an edge directed from the vertex $a$ to the vertex $b$ in the graph $X$.
\item There is an edge directed from the vertex $\sigma(a)$ to the vertex $\sigma(b)$ in the graph $Y$.
\item The permutations satisfy that $\sigma (a) = \phi(b)$ and $\sigma (b) = \phi(a)$.
\item For all vertices $c$ in $X$ not equal to $a$ or $b$, we have $\sigma (c) = \sigma'(c)$.
\end{itemize}

Below is an example of a possible directed friends-and-seats graph resulting from given directed graphs $X$ and $Y$ both with three vertices.
\begin{figure}[H]
\vspace{-0.25cm}
\begin{center}
\includegraphics[width=2.6in]{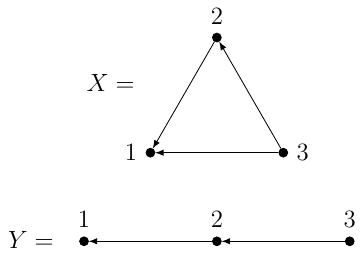}
\end{center}
\vspace{-0.5cm}
\captionsetup{justification=centering}
\caption{An example of a directed seat graph $X$ and friends graph $Y$.}
\label{fig:dfsginitial}
\end{figure}

\begin{figure}[H]
\vspace{-0.25cm}
\begin{center}
\includegraphics[width=3.9in]{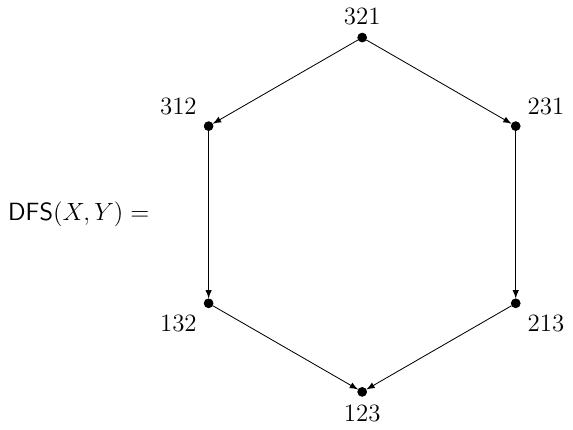}
\end{center}
\captionsetup{justification=centering}
\caption{The directed friends-and-seats graph $\DFS(X,Y)$ resulting from the seat graphs.}
\label{fig:dfsgfinal}
\end{figure}

Now, we will motivate directed friends-and-seats graphs by considering a particular example relating them to Eulerian numbers. 

Let $X$ be a transitive tournament graph with $n$ vertices labeled with the integers between $1$ and $n$, where edges are directed from vertices with greater labels to vertices with lower labels. We denote this graph as $\Tour_n$, which has edge set $\{(i \rightarrow j) \mid n \geq i > j \geq 1\}$. Let the vertices of the graph $Y$ also be labeled with the integers from $1$ to $n$, with the edge set $\{(i \rightarrow i+1) \mid 1 \leq i \leq n-1 \}$. $Y$ is also known as a directed path graph, denoted $\Path_n$. Since $\Tour_n$ and $\Path_n$ have vertices labeled with positive integers from $1$ to $n$, the graph $\DFS(\Tour_n,\Path_n)$ will have vertices labeled with permutations of the integers between $1$ and $n$.

Consider an arbitrary vertex $v \in \DFS(\Tour_n,\Path_n)$, labeled with the permutation $\pi_v$. By the definition of a friends-and-seats graph, the labels of the only possible vertices that could neighbor $v$ are of the form $\pi_v \circ (i, \; i+1)$ for $1 \leq i \leq n -1$. For each value of $i$, we know that $v$ is directed towards the vertex with label $\pi_v \circ (i \; i+1)$ if and only if in the graph $\Tour_n$, the vertex with label $\pi_v(i)$ is directed towards the vertex with label $\pi_v(i+1)$ in $X$. However, since $\Tour_n$ is a transitive tournament graph, this is the case if and only if $\pi_v(i) > \pi_v(i+1)$. 

This implies that the outdegree of the vertex $v$ in the graph $\DFS(\Tour_n, \Path_n)$ is equal to the number of descents of the vertex label $\pi_v$! Thus, we have found a relation between the outdegree of vertices in directed friends-and-seats graphs and the descents of a permutation. 

This inspires us to define the outdegree polynomial, denoted $\ODP(X,Y)$ for any arbitrary graphs $X$ and $Y$ both with $n$ vertices and calculated with the formula
\[
\ODP(X,Y) = \sum\limits_{v \in V(\DFS(X,Y))} x^{\outdeg(v)},
\]
where $\outdeg(v)$ denotes the outdegree of the vertex $v$ in $\DFS(X,Y)$, and $V(\DFS(X,Y))$ is the vertex set of the graph $\DFS(X,Y)$. Since the outdegree of the vertex $v$ in the graph $\DFS(\Tour_n, \Path_n)$ is equal to the number of descents of the vertex label $\pi_v$, we may say that
\[
A_n(x) = \ODP(\Tour_n, \Path_n)
\]
where $A_n(x)$ is the $n$th Eulerian polynomial.
\subsection{General Definitions and Conventions}
First, for any graph $X$, let $V(X)$ denote its vertex set and $E(X)$ its edge set. 

We usually assume that the graphs $X$ and $Y$ in $\DFS(X,Y)$ are regular directed graphs. However, all theorems and proofs in the following sections hold if any given graph is a directed multigraph. Directed multigraphs allow multiple edges between any two vertices $u$ and $v$, including the possibility that both edges $u \rightarrow v$ and $v \rightarrow u$ are present. If it is important for a theorem that a given graph is a multigraph, we explicitly mention so.

Define a directed graph to be \textit{acyclic} if it has no cycle of directed edges. Furthermore, define a vertex to be a \textit{sink} of a directed graph if it has outdegree $0$, and define a vertex to be a \textit{source} of a directed graph if it has indegree $0$. 

We commonly refer to a graph with $n$ vertices as being labeled with the positive integers between $1$ and $n$, which can be written with shorthand as $[n]$. A \textit{labeled acyclic graph} to be a graph with $n$ vertices labeled with $[n]$, satisfying that for all edges point from a vertex with a larger label to a vertex with a smaller label.  Note that any acyclic directed graph can be made into a labeled acyclic graph by assigning each vertex with the integers from $1$ to $n$. If there are multiple valid assignments, one is chosen arbitrarily. 

Since we assume that most graphs are labeled on $[n]$, we use some shorthand notation. Vertices in directed graphs with $n$ vertices will sometimes be referred to by numbers from $1$ to $n$. For example, the path graph $\Path_4$ can be said to have edge set $\{1 \rightarrow 2, 2 \rightarrow 3, 3 \rightarrow 4\}$. Furthermore, for a graph $X$ labeled on $[n]$ and any set $S \subseteq [n]$, the graph $X - S$ is defined to be the induced subgraph of $X$ that contains all vertices with labels not in the set $S$. The resulting directed friends-and-seats graph would thus have vertices that are bijections sending $[n]$ to $[n]$, or equivalently permutations of $[n]$.
 
For a labeled acyclic graph, we define the complement of $X$, denoted as $\conj{X}$, as the graph with vertex set $[n]$ and edge set $E(\Tour_n) \setminus E(X)$. In other words, for every $1 \leq i < j \leq n$, if the vertex with label $j$ is not directed towards the vertex with label $i$ in $X$, then $j \rightarrow i$ is an edge in $\conj{X}$.

There are several graphs that we name. Two graphs have already been mentioned: the transitive tournament graph $\Tour_n$ has vertex set $[n]$ and edge set $\{(i \rightarrow j) \mid n \geq i > j \geq 1\}$, and the directed path graph $\Path_n$ has vertex set $[n]$ and edge set $\{(i \rightarrow i+1) \mid 1 \leq i \leq n-1 \}$. Finally, the directed cycle graph $\Cycle_n$ has vertex set $[n]$ and edge set $\{(i \rightarrow i+1) \mid 1 \leq i \leq n-1 \} \cup \{n \rightarrow 1\}$. Notably, the graph $\Cycle_2$ is a multigraph, having edge set $\{1 \rightarrow 2, 2 \rightarrow 1\}$.

\section{General Properties}\label{sec:genProp}
In this section, we prove various properties of these $\DFS$ graphs that do not involve the outdegree polynomial. Some of these properties are similar to the ones seen in \cite{Defant} on regular friends-and-seats graphs.
\begin{theorem}
For any two directed graphs on $n$ vertices $X$ and $Y$, the graph $\DFS(X,Y)$ is automorphic to the graph $\DFS(Y,X)$.
\end{theorem}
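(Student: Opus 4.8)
The plan is to exhibit an explicit bijection between the vertex sets of $\DFS(X,Y)$ and $\DFS(Y,X)$ and check that it respects the directed edge relation. The natural candidate is the map sending a bijection $\sigma \colon V(X) \to V(Y)$ to its inverse $\sigma^{-1} \colon V(Y) \to V(X)$, which is clearly a bijection on vertex sets since inversion is an involution on the set of bijections between two $n$-element sets. The bulk of the argument is then to verify that $\sigma \to \phi$ is a directed edge of $\DFS(X,Y)$ if and only if $\sigma^{-1} \to \phi^{-1}$ is a directed edge of $\DFS(Y,X)$.

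To carry this out, I would unwind the definition. Suppose $\sigma \to \phi$ in $\DFS(X,Y)$, witnessed by vertices $a, b \in V(X)$ with $a \to b$ an edge of $X$, $\sigma(a) \to \sigma(b)$ an edge of $Y$, $\sigma(a) = \phi(b)$, $\sigma(b) = \phi(a)$, and $\sigma(c) = \phi(c)$ for all other $c$. Now set $a' = \sigma(a) = \phi(b)$ and $b' = \sigma(b) = \phi(a)$, which are vertices of $Y$. The plan is to check that $a'$ and $b'$ witness the edge $\sigma^{-1} \to \phi^{-1}$ in $\DFS(Y,X)$: here the roles of the two graphs are swapped, so we need $a' \to b'$ to be an edge of $Y$ (which is exactly $\sigma(a) \to \sigma(b)$, given), $\sigma^{-1}(a') \to \sigma^{-1}(b')$ to be an edge of $X$ (this is $a \to b$, given), the swap conditions $\sigma^{-1}(a') = \phi^{-1}(b')$ and $\sigma^{-1}(b') = \phi^{-1}(a')$ (i.e. $a = \phi^{-1}(b')$ and $b = \phi^{-1}(a')$, which follow from $\phi(a) = b'$ and $\phi(b) = a'$), and $\sigma^{-1}(c') = \phi^{-1}(c')$ for all other $c' \in V(Y)$. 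This last point is where a little care is needed, so I would argue it explicitly: for $c' \notin \{a', b'\}$, its $\sigma^{-1}$-preimage $c = \sigma^{-1}(c')$ is not in $\{a,b\}$ (since $\sigma$ maps $\{a,b\}$ onto $\{a',b'\}$), so $\phi(c) = \sigma(c) = c'$, giving $\phi^{-1}(c') = c = \sigma^{-1}(c')$ as desired.

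The argument is symmetric in $X$ and $Y$ (swapping their roles and replacing $\sigma, \phi$ by $\sigma^{-1}, \phi^{-1}$), so the converse implication is the same computation run backwards, and no separate work is needed. I would also note the degenerate case where $\Cycle_2$ or another multigraph appears: the definition only ever asks for the existence of a witnessing edge, so multi-edges cause no trouble. I do not expect any genuine obstacle here — the theorem is essentially a bookkeeping exercise — but the one spot that deserves attention, and that I would not gloss over, is confirming that inversion carries the "all other coordinates agree" condition across correctly, since that condition is phrased in terms of the domain ($V(X)$) on one side and $V(Y)$ on the other.
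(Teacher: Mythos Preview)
Your proposal is correct and follows essentially the same approach as the paper: both use the inversion map $\sigma \mapsto \sigma^{-1}$ and verify that an edge $\sigma \to \phi$ in $\DFS(X,Y)$ witnessed by $a,b$ corresponds to an edge $\sigma^{-1} \to \phi^{-1}$ in $\DFS(Y,X)$ witnessed by $\sigma(a),\sigma(b)$, then appeal to symmetry for the converse. If anything, you are slightly more careful than the paper, which packages the ``all other coordinates agree'' check into the transposition identity $\pi_u^{-1} \circ (\pi_u(a)\;\pi_u(b)) = \pi_v^{-1}$ rather than spelling it out.
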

\begin{proof}
Take any vertex $v$ in $\DFS(X,Y)$ with label $\pi_v$. The automorphism is the bijection that sends the vertex with label $\pi_v$ to its inverse, $\pi^{-1}_v$. We will show that every edge in $\DFS(X,Y)$ has a corresponding edge in $\DFS(Y,X)$. Consider any edge $u \rightarrow v$ in $\DFS(X,Y)$, where $\pi_u$ and $\pi_v$ are the labels of $u$ and $v$ respectively. Define $u^{-1}$ and $v^{-1}$ to be the vertices in $\DFS(Y,X)$ with labels $\pi^{-1}_u$ and $\pi^{-1}_v$ respectively. We will show that $u^{-1} \rightarrow v^{-1}$ is an edge in $\DFS(Y,X)$. 

By definition, $\pi_v$ must be of the form $\pi_u \circ (a \; b)$, where $a \rightarrow b$ is an edge in $X$ and $\pi_u(a) \rightarrow \pi_u(b)$ is an edge in $Y$. Note $\pi^{-1}_u \circ (\pi_u(a) \; \pi_u(b)) = \pi^{-1}_v$. We have already noted that $(\pi_u(a) \rightarrow \pi_u(b))$ is an edge in $Y$ and $(\pi^{-1}(\pi_u(a)), \pi^{-1}(\pi_u(b))) = (a,b)$ is an edge in $X$, so $u^{-1} \rightarrow v^{-1}$ is an edge in $\DFS(Y,X)$, as desired. 

By reversing $X$ and $Y$,we can see that if $u \rightarrow v$ is an edge in $\DFS(Y,X)$, then $u^{-1} \rightarrow v^{-1}$ is an edge in $\DFS(X,Y)$, which establishes our bijection.
\end{proof}
\begin{corollary}\label{cor:equal}
For any two directed graphs on $n$ vertices $X$ and $Y$, the polynomial $\ODP(X,Y)$ is equal to the polynomial $\ODP(Y,X)$.
\end{corollary}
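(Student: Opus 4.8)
The plan is to deduce this immediately from the preceding theorem, since an automorphism — really an isomorphism of directed graphs — between $\DFS(X,Y)$ and $\DFS(Y,X)$ preserves the outdegree of every vertex. First I would recall that the theorem exhibits an explicit isomorphism $\Phi \colon V(\DFS(X,Y)) \to V(\DFS(Y,X))$ sending the vertex labeled $\pi_v$ to the vertex labeled $\pi_v^{-1}$, and that its proof shows $u \to v$ is an edge of $\DFS(X,Y)$ if and only if $\Phi(u) \to \Phi(v)$ is an edge of $\DFS(Y,X)$ (the "by reversing $X$ and $Y$" sentence gives the converse direction). In particular, for a fixed vertex $u$, the map $v \mapsto \Phi(v)$ restricts to a bijection from the out-neighbors of $u$ in $\DFS(X,Y)$ onto the out-neighbors of $\Phi(u)$ in $\DFS(Y,X)$, so $\outdeg(u) = \outdeg(\Phi(u))$.

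Then I would finish by a change-of-variables in the defining sum. Since $\Phi$ is a bijection on vertex sets, as $v$ ranges over $V(\DFS(X,Y))$, the image $\Phi(v)$ ranges over all of $V(\DFS(Y,X))$, so
\[
\ODP(X,Y) = \sum_{v \in V(\DFS(X,Y))} x^{\outdeg(v)} = \sum_{v \in V(\DFS(X,Y))} x^{\outdeg(\Phi(v))} = \sum_{w \in V(\DFS(Y,X))} x^{\outdeg(w)} = \ODP(Y,X),
\]
which is the claim.

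There is no real obstacle here; the only thing requiring a small amount of care is making explicit that the isomorphism constructed in the theorem is in fact edge-direction-preserving (not merely edge-preserving on an undirected level), so that it genuinely equates outdegrees rather than total degrees. Since the theorem's proof is stated in terms of directed edges $u \to v$ throughout, this is already in hand, and I would just cite it.
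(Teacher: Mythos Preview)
Your proposal is correct and is exactly the intended argument: the paper states the corollary with no proof, leaving it as an immediate consequence of the preceding theorem, and your writeup simply makes explicit the observation that the directed-graph isomorphism $\pi_v \mapsto \pi_v^{-1}$ preserves outdegrees and hence $\ODP$.
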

\begin{theorem}\label{thm:subgraph}
Let $X'$ and $Y'$ be directed graphs on $n$ vertices. If $X$ is a subgraph of $X'$ and $Y$ is a subgraph of $Y'$, then $\DFS(X,Y)$ is a subgraph of $\DFS(X',Y')$. 
\end{theorem}
\begin{proof}
Consider any edge $u \rightarrow v$ between vertices $u,v \in V(\DFS(X',Y'))$. Say that $u$ and $v$ have labels` $\pi_u$ and $\pi_v$ respectively. By definition, there must exist some vertices $a,b \in V(X)$ satisfying $\pi_v = \pi_u \circ (a \; b)$, such that $a \rightarrow b$ is an edge in $X$ and $\pi_v(a) \rightarrow \pi_v(b)$ is an edge in $Y$. However, $a \rightarrow b$ must be an edge in $X'$ and $\pi_v(a) \rightarrow \pi_v(b)$ must also be an edge in $Y'$, which implies $u \rightarrow v$ is an edge in $\DFS(X',Y')$ as well.
\end{proof}

\begin{corollary}
For any two labeled acyclic graphs $X$ and $Y$ both on $n$ vertices, $\DFS(X,Y)$ is a subgraph of $\DFS(\Tour_n, \Tour_n)$.
\end{corollary}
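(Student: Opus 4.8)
The plan is to obtain this immediately from Theorem~\ref{thm:subgraph}. First I would unwind the definition of a labeled acyclic graph: both $X$ and $Y$ have vertex set $[n]$, and every edge of each of them points from a vertex with a larger label to a vertex with a smaller label. Since $\Tour_n$ has vertex set $[n]$ and edge set $\{(i \rightarrow j) \mid n \geq i > j \geq 1\}$ — that is, it contains \emph{every} edge directed from a larger label to a smaller one — it follows at once that $E(X) \subseteq E(\Tour_n)$ and $E(Y) \subseteq E(\Tour_n)$. As $X$, $Y$, and $\Tour_n$ all share the vertex set $[n]$, this means $X$ is a subgraph of $\Tour_n$ and $Y$ is a subgraph of $\Tour_n$.

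Then I would apply Theorem~\ref{thm:subgraph} with $X' = Y' = \Tour_n$, which directly yields that $\DFS(X,Y)$ is a subgraph of $\DFS(\Tour_n, \Tour_n)$. (All $\DFS$ graphs in play have vertex set the bijections from $[n]$ to $[n]$, so the vertex sets also align, and no extra bookkeeping is needed.)

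There is essentially no obstacle here: the content of the corollary is entirely in Theorem~\ref{thm:subgraph}, and the only thing to check is the elementary fact that a labeled acyclic graph on $[n]$ embeds into the transitive tournament $\Tour_n$, which is immediate from the definitions. If anything, the one point worth stating carefully is which notion of ``subgraph'' is meant — the spanning one (same vertices, subset of edges) used in Theorem~\ref{thm:subgraph} — and observing that it applies since every graph involved has the same vertex set.
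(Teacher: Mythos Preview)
Your proposal is correct and matches the paper's intent exactly: the corollary is stated without proof because it follows immediately from Theorem~\ref{thm:subgraph} once one notes that any labeled acyclic graph on $[n]$ is a subgraph of $\Tour_n$. The paper even spells out this same observation explicitly in the proof of Theorem~\ref{thm:acyc}.
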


\begin{theorem}\label{thm:acyc}
If $X$ and $Y$ are directed acyclic graphs with $n$ vertices, then $\DFS(X,Y)$ is also acyclic.
\end{theorem}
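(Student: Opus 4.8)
The plan is to exhibit a real-valued potential on the vertex set of $\DFS(X,Y)$ that strictly decreases along every directed edge; this at once forbids directed cycles.

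First I would reduce to the case in which $X$ and $Y$ are labeled acyclic graphs. Since $X$ is acyclic it has a topological ordering, hence is isomorphic as a directed graph to some labeled acyclic graph $X'$, and likewise $Y\cong Y'$. A pair of such directed isomorphisms $f\colon V(X)\to V(X')$ and $g\colon V(Y)\to V(Y')$ induces a graph isomorphism $\DFS(X,Y)\to\DFS(X',Y')$ sending the vertex labeled $\sigma$ to the vertex labeled $g\circ\sigma\circ f^{-1}$; one checks directly from the definition that this respects edges in both directions, since $f$ and $g$ carry the relevant edges of $X,Y$ to those of $X',Y'$ and conjugation by $f$ turns the transposition $(a\;b)$ into $(f(a)\;f(b))$. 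So it is enough to prove the theorem for labeled acyclic $X$ and $Y$, and then the corollary to Theorem~\ref{thm:subgraph} shows that $\DFS(X,Y)$ is a subgraph of $\DFS(\Tour_n,\Tour_n)$. Because any directed cycle in a subgraph is also a directed cycle in the ambient graph, it suffices to prove that $\DFS(\Tour_n,\Tour_n)$ is acyclic.

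For this, recall that the vertices of $\DFS(\Tour_n,\Tour_n)$ are the permutations $\sigma\in S_n$ and that there is an edge $\sigma\to\phi$ precisely when $\phi=\sigma\circ(a\;b)$ for positions $a>b$ with $\sigma(a)>\sigma(b)$. Set $\Phi(\sigma)=\sum_{i=1}^{n} i\,\sigma(i)$. Along such an edge only the values at positions $a$ and $b$ change, so
\[
\Phi(\phi)-\Phi(\sigma)=\bigl(a\,\sigma(b)+b\,\sigma(a)\bigr)-\bigl(a\,\sigma(a)+b\,\sigma(b)\bigr)=-(a-b)\bigl(\sigma(a)-\sigma(b)\bigr)<0,
\]
since $a>b$ and $\sigma(a)>\sigma(b)$. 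Hence $\Phi$ strictly decreases along every directed edge, therefore along every directed walk, so no directed walk can return to its starting vertex and $\DFS(\Tour_n,\Tour_n)$ has no directed cycle.

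Since the edge analysis is a one-line computation, the only real choice in the proof is the potential function, and that is the step I expect a first attempt to get stuck on. The natural first guess, the number of inversions of $\sigma$, does strictly increase along each edge, but verifying this is delicate because the transposition $(a\;b)$ need not swap adjacent positions and several unrelated pairs can flip simultaneously; the linear weighting $\sum i\,\sigma(i)$ sidesteps that bookkeeping entirely, which is why I would use it. One could also skip the reduction to $\Tour_n$ and run the same potential directly on $\DFS(X,Y)$ for labeled acyclic $X$ and $Y$, but passing through the corollary keeps the computation in its cleanest form.
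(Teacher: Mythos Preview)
Your proof is correct and follows essentially the same route as the paper: reduce via Theorem~\ref{thm:subgraph} to showing $\DFS(\Tour_n,\Tour_n)$ is acyclic, then use the potential $\Phi(\sigma)=\sum_i i\,\sigma(i)$ and the identical one-line computation. The only difference is cosmetic---you spell out the isomorphism to labeled acyclic graphs and add commentary on alternative potentials, whereas the paper simply asserts the relabeling---but the argument is the same.
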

\begin{proof}
Say that $X$ and $Y$ are labeled acyclic graphs on $[n]$. Note that $X$ and $Y$ must both be subgraphs of the transitive tournament graph $\Tour_n$, so by Theorem~\ref{thm:subgraph} we can finish by showing that $\DFS(\Tour_n, \Tour_n)$ is acyclic. Now, for every permutation $\sigma \in \DFS(\Tour_n, \Tour_n)$, we define 
\[
f(\sigma) = \sum_{i=1}^{n} i\sigma(i). 
\]
We claim that if any two permutations $\sigma,\phi \in \DFS(\Tour_n, \Tour_n)$ satisfy that $\sigma \rightarrow \phi$, then $f(\sigma) > f(\phi)$, which suffices. Consider any two permutations $\sigma,\phi$ on $[n]$ with $\sigma \rightarrow \phi$ being an edge in $\DFS(\Tour_n, \Tour_n)$. By definition, $\phi$ can be expressed as $\sigma \circ (a \; b)$ for some vertices $a,b \in V(X)$ with $a \rightarrow b \in E(X)$ and $\sigma(a) \rightarrow \sigma(b) \in E(Y)$. Since $X$ and $Y$ are labeled acyclic graphs on $[n]$, we must have $a > b$ and $\sigma(a) > \sigma(b)$. However,
\[
f(\phi) = \sum_{i=1}^{n} i \phi(i) =  \sum_{i=1}^{n} i \sigma(i) - (a-b)(\sigma(a) - \sigma(b)) < f(\sigma)
\]
as desired.
\end{proof}
 Theorem~\ref{thm:acyc} is not generally true for graphs that are not acyclic. For instance, for graphs $X,Y$ labeled on $[3]$  with edge sets $E(X) = \{1 \rightarrow 2, 2 \rightarrow 3, 3 \rightarrow 1\}$ and $E(Y) = \{1 \rightarrow 2, 2 \rightarrow 3, 1 \rightarrow 3\}$, then $\DFS(X,Y)$ contains the cycle
 \[
 123 \rightarrow 213 \rightarrow 312 \rightarrow 321 \rightarrow 123.
 \]
\begin{figure}[H]
\begin{center}
\includegraphics[width=2.8in]{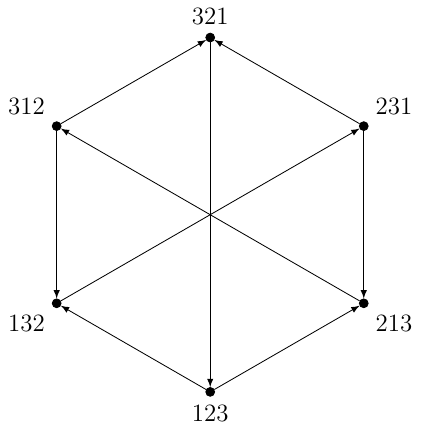}
\end{center}
\captionsetup{justification=centering}
\caption{The friends-and-seats graph $\FS(X,Y)$ resulting from the graphs described above.}
\label{fig:cyclexample}
\end{figure}

\section{Outdegree Polynomial}\label{sec:ODP Intro}
Recall that in Section~\ref{sec:Prelim} we defined the outdegree polynomial on the two graphs $X$ and $Y$ to be

\[
\ODP(X,Y) = \sum\limits_{v \in V(\DFS(X,Y))} x^{\outdeg(v)}.
\]
In this section, we examine various ways to determine the value of this polynomial for graphs $X$ and $Y$, mostly through the lens of adding and removing edges.

For any vertices $a,b \in X$, define $\ODP(X,Y)_{a \rightarrow b}$ to be the sum of $x^{\outdeg(v)}$ for all vertices $v \in V(\DFS(X,Y))$ with labels $\sigma$ such that $\sigma(a) \rightarrow \sigma(b)$ is an edge in $Y$. Notably, if $a \rightarrow b \in E(X)$, then  $x \mid \ODP(X,Y)_{a \rightarrow b}$.

Furthermore, define $\ODP_{\sigma(i) = j}$ to be the sum of $x^{\outdeg(v)}$ all vertices $v$ with labels $\sigma$ satisfying $\sigma(i) = j$. Finally, for any graph $X$ labeled on $[n]$ and arbitrary edge $u \rightarrow v \in E(X)$, define $X^{uv}$ to be the multigraph labeled on $[n] \setminus \{v\}$ such that any edge of the form $\{i \rightarrow v\}$ or $\{v \rightarrow j\}$ in $X$ has been replaced with $\{i \rightarrow u\}$ and $\{u \rightarrow j\}$ respectively in $X^{uv}$.

\begin{theorem}\label{thm:odp}
Consider any two directed graphs $X,Y$ with $V(X) = V(Y) = [n]$. For an arbitrary edge $\{a \rightarrow b\} \in E(X)$, define $X'$ to be the graph with vertex set $[n]$ and edge set $E(X) = E(X') \setminus \{a \rightarrow b\}$. We have:
\[
\ODP(X',Y) = \ODP(X,Y) - \left(\frac{x-1}{x}\right) \ODP(X,Y)_{a \rightarrow b}.
\]
\end{theorem}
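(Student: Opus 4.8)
The goal is to compare the directed friends-and-seats graphs $\DFS(X',Y)$ and $\DFS(X,Y)$, which differ only in that $X$ has one extra edge $a \rightarrow b$. By Theorem~\ref{thm:subgraph}, $\DFS(X',Y)$ is a subgraph of $\DFS(X,Y)$, obtained by deleting exactly those edges $u \rightarrow w$ that are \emph{caused} by the pair $(a,b)$, namely edges where the label $\pi_u$ satisfies $\pi_u(a) \rightarrow \pi_u(b) \in E(Y)$ and $\pi_w = \pi_u \circ (a\;b)$. (One should check there is no ambiguity: an edge of $\DFS(X,Y)$ from $u$ to $w$ could in principle arise from more than one transposition, but since the transposition is determined by $\pi_u$ and $\pi_w$, each deleted edge is deleted for a unique reason.) The plan is therefore to track, for each vertex $v$, exactly how much its outdegree drops when we pass from $\DFS(X,Y)$ to $\DFS(X',Y)$, and then to sum $x^{\outdeg_{X'}(v)} - x^{\outdeg_X(v)}$ over all $v$.

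First I would partition the vertices of $\DFS(X,Y)$ into three classes according to the behavior of the label $\sigma = \pi_v$ relative to the pair $(a,b)$. Class I: $\sigma(a) \rightarrow \sigma(b) \in E(Y)$. For such $v$, the edge $v \rightarrow (v \circ (a\;b))$ exists in $\DFS(X,Y)$ but not in $\DFS(X',Y)$, so $\outdeg_{X'}(v) = \outdeg_X(v) - 1$; this class contributes $\sum_{v \in \mathrm{I}} \bigl(x^{\outdeg_X(v)-1} - x^{\outdeg_X(v)}\bigr) = \frac{1-x}{x}\,\ODP(X,Y)_{a\rightarrow b}$ by the very definition of $\ODP(X,Y)_{a\rightarrow b}$. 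Class II: $\sigma(b) \rightarrow \sigma(a) \in E(Y)$ (and $\sigma(a)\rightarrow\sigma(b) \notin E(Y)$, unless $Y$ is a multigraph with both, in which case put it in Class I). Class III: neither. For vertices in Classes II and III, none of the outgoing edges of $v$ uses the transposition $(a\;b)$ in the relevant direction—the only edge that the removal of $a\rightarrow b$ could kill is $v \rightarrow (v\circ(a\;b))$, and that requires $\sigma(a)\rightarrow\sigma(b)\in E(Y)$—so $\outdeg_{X'}(v) = \outdeg_X(v)$ and these classes contribute $0$. Summing, $\ODP(X',Y) - \ODP(X,Y) = \frac{1-x}{x}\,\ODP(X,Y)_{a\rightarrow b}$, which rearranges to the claimed identity.

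The one genuine subtlety—and the step I expect to require the most care—is the claim that for a Class~I vertex $v$, removing $a\rightarrow b$ from $X$ kills \emph{exactly one} outgoing edge of $v$, not zero and not two. It kills at least one (the edge to $v\circ(a\;b)$, which was present precisely because $a\rightarrow b\in E(X)$ and $\sigma(a)\rightarrow\sigma(b)\in E(Y)$). It kills at most one because any other outgoing edge of $v$ in $\DFS(X,Y)$ comes from some pair $(a',b') \neq (a,b)$ with $a'\rightarrow b' \in E(X)$, and such an edge survives in $\DFS(X',Y)$ since $X'$ still contains $a'\rightarrow b'$ and $Y$ is unchanged; and it is impossible for a single vertex to have two outgoing edges both labeled $v \rightarrow v\circ(a\;b)$ unless we are counting multiplicities, but outdegree in $\DFS$ counts distinct target vertices. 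The rest—verifying Classes~II and III contribute nothing, and matching the Class~I sum to the definition of $\ODP(X,Y)_{a\rightarrow b}$—is a direct bookkeeping check, and the factor $\frac{x-1}{x}$ (rather than $\frac{1-x}{x}$) appears after moving the term to the other side of the equation.
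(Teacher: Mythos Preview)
Your proposal is correct and follows essentially the same approach as the paper: both arguments partition the vertices of $\DFS(X,Y)$ according to whether $\pi_v(a)\rightarrow\pi_v(b)\in E(Y)$, observe that the outdegree drops by exactly one in the affirmative case and is unchanged otherwise, and then sum. Your write-up is in fact more careful than the paper's in justifying the ``exactly one'' claim and in flagging the multigraph subtlety, but the underlying idea is identical.
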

\begin{proof}
Consider a vertex $v \in V(\DFS(X,Y))$ with corresponding permutation $\pi_v$. If $\pi_v(a) \rightarrow \pi_v(b)$ is not an edge in $Y$, then $\outdeg(v)$ is the same in both $\DFS(X,Y)$ and $\DFS(X',Y)$. Otherwise, the outdegree of $v$ decreases by one when going from $\DFS(X,Y)$ to $\DFS(X',Y)$. Thus, we have 
\begin{align*}
\ODP(X',Y) &= \ODP(X,Y) - \ODP(X,Y)_{a \rightarrow b} + \ODP(X',Y)_{a \rightarrow b} \\
&= \ODP(X,Y) - \ODP(X,Y)_{a \rightarrow b} + \frac{1}{x} \ODP(X,Y)_{a \rightarrow b} \\
&= \ODP(X,Y) - \left(\frac{x-1}{x}\right) \ODP(X,Y)_{a \rightarrow b}.
\end{align*}
as expected.
\end{proof}
Unfortunately, calculating $\ODP(X,Y)_{a \rightarrow b}$ is often very difficult. The next couple of theorems prove that it is doable under certain graphs $X$ and certain properties of $a$ and $b$. We define a subset $S$ of the vertex labels of $[n]$ is said to be \textit{self-equivalent} if for any fixed element $t \in [n] \setminus S$ either every edge in $S$ is directed towards $t$, $t$ is directed towards every edge in $S$, or there is no edge between any element of $S$ and $t$. 
\begin{theorem}
Let $\{a, b\}$ be a self-equivalent set of vertex labels in a directed graph $X$ labeled on $[n]$, where $a \rightarrow b \in E(X)$. Then, 
\[
\ODP(X,Y)_{a \rightarrow b} = x \ODP(X,Y)_{b \rightarrow a}
\]
\end{theorem}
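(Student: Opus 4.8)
The plan is to prove the identity by an explicit sign‑free bijection together with a bookkeeping of outdegrees. Write $\outdeg(\sigma)$ for the outdegree in $\DFS(X,Y)$ of the vertex labeled by the permutation $\sigma$, and define $\Phi\colon S_n\to S_n$ by $\Phi(\sigma)=\sigma\circ(a\;b)$, i.e.\ interchange the two values in positions $a$ and $b$. Since every permutation of $[n]$ is a vertex of $\DFS(X,Y)$ no matter what the edge sets are, $\Phi$ is a (self‑inverse) bijection of $V(\DFS(X,Y))$. First I would check that $\Phi$ restricts to a bijection from $\mathcal A:=\{\sigma : \sigma(a)\to\sigma(b)\in E(Y)\}$ onto $\mathcal B:=\{\sigma : \sigma(b)\to\sigma(a)\in E(Y)\}$: if $\sigma'=\Phi(\sigma)$ then $\sigma'(a)=\sigma(b)$ and $\sigma'(b)=\sigma(a)$, so $\sigma'(b)\to\sigma'(a)$ is literally the edge $\sigma(a)\to\sigma(b)$. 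Hence
\[
\ODP(X,Y)_{a\rightarrow b}=\sum_{\sigma\in\mathcal A}x^{\outdeg(\sigma)},\qquad
\ODP(X,Y)_{b\rightarrow a}=\sum_{\sigma\in\mathcal B}x^{\outdeg(\sigma)}=\sum_{\sigma\in\mathcal A}x^{\outdeg(\Phi(\sigma))},
\]
and it suffices to prove $\outdeg(\sigma)=\outdeg(\Phi(\sigma))+1$ for every $\sigma\in\mathcal A$.

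For that comparison I would write $\outdeg(\sigma)$ as the number of edges $c\to d$ of $X$ with $\sigma(c)\to\sigma(d)\in E(Y)$, and split the count according to how $\{c,d\}$ meets $\{a,b\}$. Edges of $X$ disjoint from $\{a,b\}$ contribute identically to $\sigma$ and to $\Phi(\sigma)$, since $\Phi$ changes no value outside $\{a,b\}$. For an edge with exactly one endpoint in $\{a,b\}$, self‑equivalence of $\{a,b\}$ guarantees that for each $e\notin\{a,b\}$ the edge $a\to e$ lies in $X$ iff $b\to e$ does, and likewise $e\to a$ lies in $X$ iff $e\to b$ does; grouping the contribution of $a\to e$ together with that of $b\to e$ (and of $e\to a$ with $e\to b$) and using that passing from $\sigma$ to $\Phi(\sigma)$ merely swaps the two $Y$‑labels $\sigma(a)$ and $\sigma(b)$, one sees each such grouped contribution is the same for $\sigma$ and $\Phi(\sigma)$. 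Finally, the pair $\{a,b\}$ itself: the edge $a\to b\in E(X)$ together with $\sigma(a)\to\sigma(b)\in E(Y)$ (because $\sigma\in\mathcal A$) contributes $1$ to $\outdeg(\sigma)$, while it contributes $0$ to $\outdeg(\Phi(\sigma))$ since $\sigma'(a)\to\sigma'(b)=\sigma(b)\to\sigma(a)\notin E(Y)$. Summing, $\outdeg(\sigma)-\outdeg(\Phi(\sigma))=1$, and then
\[
\ODP(X,Y)_{a\rightarrow b}=\sum_{\sigma\in\mathcal A}x^{\outdeg(\Phi(\sigma))+1}=x\sum_{\tau\in\mathcal B}x^{\outdeg(\tau)}=x\,\ODP(X,Y)_{b\rightarrow a}.
\]

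The main obstacle is the middle case: showing that edges of $X$ meeting $\{a,b\}$ in exactly one vertex contribute the same total outdegree before and after applying $\Phi$. This is exactly where the self‑equivalence hypothesis is needed — without it, relabeling the seats at $a$ and $b$ could create or destroy an outgoing edge through some third vertex $e$, destroying the clean ``$+1$''. A secondary point to be careful about is the direction/multiplicity bookkeeping around the pair $\{a,b\}$; I would carry this out under the paper's standing convention that $X$ and $Y$ are simple, so that $a\to b\in E(X)$ rules out $b\to a\in E(X)$ and $\sigma(a)\to\sigma(b)\in E(Y)$ rules out $\sigma(b)\to\sigma(a)\in E(Y)$, which is what makes the pair $\{a,b\}$ contribute precisely one extra outgoing edge to $\sigma$.
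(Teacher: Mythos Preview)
Your proposal is correct and follows essentially the same approach as the paper: both use the involution $\sigma\mapsto\sigma\circ(a\;b)$ to biject $\mathcal A$ with $\mathcal B$, and both establish $\outdeg(\sigma)=\outdeg(\Phi(\sigma))+1$ by splitting the outgoing edges according to how $\{i,j\}$ meets $\{a,b\}$. Your handling of the one-endpoint case---pairing the $X$-edge $a\to e$ with $b\to e$ (and $e\to a$ with $e\to b$) and observing that $\Phi$ swaps their $Y$-contributions---is in fact more explicit than the paper's, which simply asserts that ``$v'\to v'\circ(i\;j)$ must also be an edge'' without saying that one should replace $a$ by $b$ (or vice versa) in the transposition; your grouping makes the bijection of outgoing edges transparent. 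Your closing remark about needing $Y$ to lack the reverse edge $\sigma(b)\to\sigma(a)$ is well taken and matches the paper's standing convention that $X,Y$ are ordinary directed graphs rather than multigraphs.
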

\begin{proof}
Let $S$ be the set of all permutations $\sigma$ on $[n]$ satisfying that $\sigma(a) \rightarrow \sigma(b) \in Y$. Similarly, let $T$ be the set of all permutations $\sigma$ on $[n]$ satisfying that $\sigma(b) \rightarrow \sigma(a) \in Y$.  Note that for any $\sigma \in S$, there exists a corresponding permutation $\sigma \circ (a \; b) \in T$. 

Consider any permutation $\sigma \in S$ and the corresponding vertex $v \in \DFS(X,Y)$ with label $\sigma$. Define $\sigma' = \sigma \circ (a \; b)$ and let $v'$ be the vertex in $\DFS(X,Y)$ with label $\sigma'$. We prove that $\outdeg(v) = \outdeg(v') + 1$, which finishes by summing over all $\sigma \in S$. Say that $v \rightarrow v \circ (i \; j)$ is an edge in $\DFS(X,Y)$, for some vertices $i,j \in V(X)$. 
\begin{itemize}
\item If $i,j \not\in \{a,b\}$, it is clear that $v' \rightarrow v' \circ(i \; j)$ is also an edge in $\DFS(X,Y)$.
\item If exactly one of $i,j$ is $a$ or $b$, we can note by the definition of self-equivalent that $v' \rightarrow v' \circ(i \; j)$ must also be edge in $\DFS(X,Y)$.
\item Note that when $i = a$ and $j = b$, there is an edge from $v \rightarrow v'$ as $v \circ (a \; b) = v'$.   
\end{itemize}
Thus, all outward edges correspond except for $1$, implying that $v$ always has one higher outdegree than $v'$.
\end{proof}
We define two relaxed conditions related to self-equivalency. Consider a graph $X$ labeled on $[n]$. A subset $S \subseteq [n]$ is \textit{sink-equivalent} if for all $t \in [n] \setminus S$, either all $s \in S$ are directed towards $t$ or no $s \in S$ are directed towards $t$ in the graph $X$. Similarly, a subset $S \subseteq [n]$ is \textit{source-equivalent} if for every $t \in [n] \setminus S$, either $t$ is directed towards all $s \in S$ or $t$ is directed towards no $s \in S$ in the graph $X$. Notably, a set is self-equivalent if and only if it is both sink-equivalent and source-equivalent. 
\begin{theorem}\label{thm:pointsquishing}
Consider two directed graphs (possibly multigraphs) $X,Y$ labeled on $[n]$, and a sink-equivalent set $\{a,b\}$ of $X$ such that $\{a \rightarrow b\} \in E(X)$. Let $X'$ be the induced subgraph of $X$ with vertex set $[n] \setminus b$.  We have:
\[
\ODP(X,Y)_{a \rightarrow b} = x\sum_{\{u\rightarrow v\} \in E(Y)} \ODP(X',Y^{uv})_{\sigma(a) = u}.
\]
\end{theorem}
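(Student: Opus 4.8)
The plan is to prove the identity by a bijective argument, carried out one edge of $Y$ at a time. Note first that a permutation $\sigma$ of $[n]$ is counted by $\ODP(X,Y)_{a\to b}$ exactly when $(\sigma(a),\sigma(b))=(u,v)$ for some edge $u\to v$ of $Y$. So if, for each such edge, I can match the permutations $\sigma$ with $\sigma(a)=u$ and $\sigma(b)=v$ against the vertices $\sigma'$ of $\DFS(X',Y^{uv})$ with $\sigma'(a)=u$ in a way that drops the outdegree by exactly $1$, then summing over all edges $u\to v$ of $Y$ and pulling out a factor of $x$ gives the claimed formula.

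The bijection is the natural restriction map: given $\sigma$ with $\sigma(a)=u$ and $\sigma(b)=v$, let $\sigma'$ be the restriction of $\sigma$ to $[n]\setminus\{b\}$. Since $v=\sigma(b)$, this is a bijection from $V(X')=[n]\setminus\{b\}$ onto $V(Y^{uv})=[n]\setminus\{v\}$, hence a vertex of $\DFS(X',Y^{uv})$, and it satisfies $\sigma'(a)=u$. Conversely, any such $\sigma'$ extends uniquely to a permutation of $[n]$ by setting $\sigma(b)=v$, and this $\sigma$ again has $\sigma(a)\to\sigma(b)\in E(Y)$, so it is one of the permutations counted by $\ODP(X,Y)_{a\to b}$.

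Next I would compare the outdegrees. The out-edges of $\sigma$ in $\DFS(X,Y)$ are the transpositions $(i\;j)$ with $i\to j\in E(X)$ and $\sigma(i)\to\sigma(j)\in E(Y)$, counted with the product of the two edge multiplicities; I would split them according to whether they involve the position $b$. The transposition $(a\;b)$ occurs exactly once, via $a\to b\in E(X)$ and $u\to v=\sigma(a)\to\sigma(b)\in E(Y)$: this is the $+1$. A transposition avoiding $b$ corresponds to an out-edge of $\sigma'$, since $X'$ is an induced subgraph of $X$ and the relevant edge of $Y$ survives in $Y^{uv}$ (one only has to track edges incident to $u$, onto which the edges at $v$ have been glued). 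For a transposition $(b\;j)$ with $j\neq a$, sink-equivalence of $\{a,b\}$ says $b\to j\in E(X)$ iff $a\to j\in E(X)$, so after deleting $b$ this out-edge should be accounted for by $(a\;j)$ together with the edges of $Y$ at $v$ that have been moved onto $u$ in $Y^{uv}$. Assembling all of this should give $\outdeg_{\DFS(X,Y)}(\sigma)=\outdeg_{\DFS(X',Y^{uv})}(\sigma')+1$, completing the proof.

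The main obstacle is exactly this last bookkeeping. Because $b$ is deleted from $X$ while its image $v$ is merged into $u=\sigma(a)$ rather than deleted in $Y$, the out-edges of $\sigma$ that use position $b$ have no obvious counterpart at $\sigma'$, and the content of the argument is checking that sink-equivalence makes each of them reappear as an out-edge of $\sigma'$ through position $a$, with the right multiplicity, and that the passage to $Y^{uv}$ does not manufacture any extra out-edges of $\sigma'$. One also has to dispatch the multigraph corner cases — parallel edges, and the possibility that $b\to a\in E(X)$ or $v\to u\in E(Y)$, which would inflate the count of the transposition $(a\;b)$ — but these are a matter of care, not of new ideas.
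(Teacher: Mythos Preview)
Your plan is exactly the paper's: split $\ODP(X,Y)_{a\to b}$ according to the edge $u\to v=(\sigma(a)\to\sigma(b))$ of $Y$, use the restriction $\sigma\mapsto\sigma'=\sigma|_{[n]\setminus\{b\}}$ as the bijection to the vertices of $\DFS(X',Y^{uv})$ with $\sigma'(a)=u$, and then run a case analysis on whether the $X$-edge of an out-transposition touches $b$ to show $\outdeg(\sigma)=\outdeg(\sigma')+1$. The paper carries out precisely these three cases (neither index equals $b$; the tail is $b$; the head is $b$) and singles out $(a\;b)$ as the one uncancelled out-edge, just as you do.

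One caution about the bookkeeping you flag. Your sentence ``sink-equivalence of $\{a,b\}$ says $b\to j\in E(X)$ iff $a\to j\in E(X)$'' handles out-edges of $\sigma$ that use an $X$-edge \emph{leaving} $b$, and that is exactly what sink-equivalence gives. It says nothing about $X$-edges \emph{entering} $b$: an out-edge of $\sigma$ built from $j\to b\in E(X)$ (with $j\neq a$) and $\sigma(j)\to v\in E(Y)$ wants to be matched with an out-edge of $\sigma'$ built from $j\to a\in E(X')$ and $\sigma(j)\to u\in E(Y^{uv})$, and the second ingredient is there by construction of $Y^{uv}$ but the first is not forced by sink-equivalence. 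The paper's proof passes over this case with ``we again may replace $b$ with $a$,'' so you are at the same level of detail; just be aware that in full generality this is the step that actually needs justification, not merely care, and your write-up should make explicit whatever hypothesis on in-edges at $b$ you are using.
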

\begin{proof}
For every edge $u \rightarrow v \in E(Y)$, we will define $\ODP(X,Y)_{\sigma(a) = u, \sigma(b) = v}$ to be the sum of $x^{\outdeg(v)}$ for every vertex $v \in \DFS(X,Y)$ with label $\sigma$ satisfying $\sigma(a) = u$ and $\sigma(b) = v$. Summing over every edge in $Y$, we have:
\[
\ODP(X,Y)_{a \rightarrow b}  = \sum_{\{u\rightarrow v\} \in E(Y)}\ODP(X,Y)_{\sigma(a) = u, \sigma(b) = v}.
\]
Thus, it suffices to prove that 
\[
\ODP(X,Y)_{\sigma(a) = u, \sigma(b) = v} = x\ODP(X',Y^{uv})_{\sigma(a) = u}.
\]
To do this, we create a bijection between vertices $v \in \DFS(X,Y)$ with labels $\sigma$ satisfying the restrictions on the left-hand side and vertices $v' \in \DFS(X',Y)$ with labels $\phi$ satisfying the restriction on the right-hand side. In particular, we must have that the outdegree of $v$ in $\ODP(X,Y)$ is one more than the outdegree of $v'$ in $\ODP(X',Y^{uv})$. To do this, consider any valid permutation $\sigma$ on $[n]$ satisfying that $\sigma(a) = u$ and $\sigma(b) = v$, and let $\phi(i) = \sigma(i)$ for all $i \in [n] \setminus \{b\}$. Note that $\phi$ is a bijection from $[n] \setminus \{b\}$ to $[n] \setminus \{v\}$, as expected.

We claim that there is a bijection from edges going out of $v$ to edges going out of $v'$, with one exception. This would show that $v$ has one higher outdegree than $v'$, finishing. Consider any integers $1 \leq i < j \leq n$ such that there is an edge going from $v$ to the vertex with label $\sigma \circ (i \; j)$.
\begin{itemize}
\item If $i,j$ are both not $b$, then there exists a corresponding edge with vertex labels $\phi \rightarrow \phi \circ (i \; j)$.
\item If $i = b$ (and $j \neq a$), then we may replace $b$ with $a$ to get a corresponding edge with vertex labels $\phi \rightarrow \phi \circ (a \; j)$, because $a$ and $b$ are sink-equivalent and every edge from $\sigma(i) = b \rightarrow j$ has been replaced with one from $a \rightarrow j$ in $Y^{uv}$.
\item Similarly, if $j = b$ and $i \neq a$, then we again may replace $b$ with $a$.
\item Finally, there is an edge $\sigma \rightarrow \sigma \circ (a \; b)$ that does not correspond with one in $\phi$.
\end{itemize}
\end{proof}
\section{Generalized Eulerian Numbers}\label{sec:ODP Examples}
In this section, we use results from previous sections to prove statements about generalized Eulerian numbers. First, we define the notion of a \textit{perfect elimination ordering}. We will then use this definition to reprove a theorem relating generalized Eulerian numbers to the chromatic number of graphs. We will then further generalize this theorem.
\subsection{Perfect Elimination Ordering}
Perfect elimination orderings and directed chordal graphs are defined as the following:
\begin{definition}
Let $X$ be a labeled acyclic graph. Then, the labeling $(1,2,\ldots,n)$ of the vertices of the graph $X$ is a \textit{perfect elimination ordering} of $X$ if for any $1 \leq i < j \leq n$, if $j$ is directed to $i$, then there is a clique containing all vertices between $i$ and $j$ inclusive, i.e., for any $j \geq b > a \geq i$, $b$ is directed towards $a$. A labeled acyclic graph is a \textit{directed chordal} graph if there exists a labeling of its vertices that is a perfect elimination ordering.
\end{definition}

For instance, a transitive tournament graph is a directed chordal graph. We can construct the complement of all directed chordal graphs by removing sink-equivalent edges:
\begin{theorem}\label{thm:chordalSinkEquivalent}
Let $X$ be a labeled acyclic graph on $[n]$ such that its complement, denoted $\conj{X}$, is a directed chordal graph with $k$ edges. Then, there exists a sequence of graphs
\[
\Tour_n = X_0, X_1, X_2, \ldots, X_k = X
\]
such that for all $1 \leq i \leq k$ there exists a sink-equivalent set $\{a_i,b_i\}$ such that $E(X_{i-1}) = E(X_i) \cup \{a_i \rightarrow b_i\}$, and $\conj{X_i}$ is a directed chordal graph.
\end{theorem}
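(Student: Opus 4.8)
The plan is to build the chain by peeling off one edge of $\conj X$ at a time, inducting on $k=|E(\conj X)|$. Since deleting an edge from $X_{i-1}$ is the same as adding one to $\conj{X_{i-1}}$, it is cleanest to work with the complements $H_i:=\conj{X_i}$: a chain $\Tour_n=X_0\supseteq\cdots\supseteq X_k=X$ with $E(X_{i-1})=E(X_i)\cup\{a_i\to b_i\}$ corresponds to a chain $\emptyset=H_0\subseteq\cdots\subseteq H_k=\conj X$ with $E(H_i)=E(H_{i-1})\cup\{a_i\to b_i\}$, the requirement ``$\conj{X_i}$ directed chordal'' becomes ``$H_i$ directed chordal'', and the sink-equivalence of $\{a_i,b_i\}$ is required in $X_{i-1}=\conj{H_{i-1}}$. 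I will use the perfect elimination ordering of each directed chordal graph, identified with its vertex labelling $(1,\dots,n)$ (i.e.\ we take the given labelling of $\conj X$ to be a perfect elimination ordering, which is the setting relevant to the applications); for such a graph $H$ the defining clique condition forces the out-neighbourhood $N^+_H(j)$ of every vertex $j$ to be an interval $\{m_j,m_j+1,\dots,j-1\}$, and this is the only structural fact I need.

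For the inductive step, let $H:=\conj X$ have $k\ge 1$ edges. Let $b$ be the smallest vertex that has an incoming edge in $H$ (it exists since $H$ has an edge), and let $a$ be the largest vertex with $a\to b\in E(H)$. I claim $e:=a\to b$ is the edge to delete last, i.e.\ that $H':=H-e$ is directed chordal with the same labelling as a perfect elimination ordering and $\{a,b\}$ is sink-equivalent in $\conj{H'}$; granting this, set $H_{k-1}:=H'$, $(a_k,b_k):=(a,b)$, apply the inductive hypothesis to $\conj{H'}$ (whose complement $H'$ is directed chordal with $k-1$ edges) to get the chain $X_0,\dots,X_{k-1}=\conj{H'}$, and append $X_k=X$. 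The base case $k=0$ is immediate: $\conj X=\emptyset$, so $X=\Tour_n$ and the one-term chain works.

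To see that $H'$ is directed chordal, note that $H'$ and $H$ differ only in the pair $(a,b)$, so the clique condition for $H'$ can fail only for an edge $q\to p$ of $H'$ with $p\le b$ and $q\ge a$. But then $p$ has an incoming edge in $H$, so $p\ge b$ by minimality of $b$, hence $p=b$; and $q\to b\in E(H)$, so $q\le a$ by maximality of $a$, hence $q=a$; so $q\to p=e\notin E(H')$, a contradiction. For the sink-equivalence: first, $b$ is a sink of $H$ (an edge $b\to s$ would make $s<b$ a vertex with an incoming edge), and $b=\min N^+_H(a)$ (an edge $a\to s$ with $s<b$ would again contradict minimality of $b$), so $N^+_H(a)=\{b,b+1,\dots,a-1\}$ because $N^+_H(a)$ is an interval ending at $a-1$. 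Now $E(\conj{H'})=E(\conj H)\cup\{e\}$, and for every vertex $t\notin\{a,b\}$ the edges $a\to t$ and $b\to t$ are simultaneously present or absent in $\conj{H'}$: if $t<b$ then $a\to t,b\to t\notin E(H)$, so both lie in $\conj{H'}$; if $b<t<a$ then $a\to t\in E(H)$ (so $a\to t\notin\conj{H'}$) and $b\to t\notin E(\Tour_n)$, so both are absent; and if $t>a$ neither edge is in $E(\Tour_n)$. Hence $\{a,b\}$ is sink-equivalent in $\conj{H'}=\conj{H_{k-1}}=X_{k-1}$, as needed.

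The crux is choosing the right edge so that a single deletion preserves directed chordality and produces the sink-equivalence at the same time. Deleting an edge at the top of the order, or an edge whose endpoints happen already to be sink-equivalent in $H$ itself, typically fails on one count or the other --- already a four-vertex example shows that removing a seemingly harmless edge of a directed chordal graph can leave a graph that is not directed chordal, and an edge sink-equivalent in $H$ need not be sink-equivalent in the relevant complement. The bottom-up choice above is exactly what makes things line up: minimality of $b$ forces $b$ to be a sink and pins down $N^+_H(a)$, which is precisely what the sink-equivalence computation needs, while maximality of $a$ is precisely the condition that survives the deletion as a perfect elimination ordering. The remaining work --- translating between the $X_i$ and the $H_i$, and the short case check above --- is routine bookkeeping.
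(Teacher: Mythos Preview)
Your proof is correct and follows essentially the same route as the paper's: both select, in $\conj X$, the edge whose head is the smallest vertex with positive indegree and whose tail is the largest vertex pointing there, and then verify that deleting this edge preserves the perfect elimination ordering and yields sink-equivalence of the endpoints. Your formulation via the complements $H_i$ and the observation that each out-neighbourhood $N^+_H(j)$ is an interval $\{m_j,\dots,j-1\}$ is a bit cleaner, and your naming (with $a>b$ so that $a\to b$ is the edge added back to $X$) is consistent with the theorem statement, whereas the paper's proof has the roles of $a_k$ and $b_k$ swapped relative to the statement.
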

\begin{proof}
It suffices to show that a graph $X_{k-1}$ exists where $\conj{X_{k-1}}$ is chordal and $E(X_{k-1}) = E(X_k) \cup \{a_k \rightarrow b_k\}$ for some sink-equivalent set $\{a_k,b_k\}$. If this is possible, then we may repeat this process with $X_{k-1}, X_{k-2}, \ldots, X_1$ until we reach $\Tour_n$.

We pick $b_k \rightarrow a_k$ to be the edge in $\conj{X_k}$ such that $a_k$ is the smallest vertex with positive indegree, and $b_k$ is the largest label of a vertex that has a directed edge to $a_k$. Again, let $X_{k-1}$ have the same vertex set as $X_k$ with $E(X_{k-1}) = E(X_k) \cup \{a_k \rightarrow b_k\}$. We need to prove that $\conj{X_{k-1}}$ is a directed chordal graph and that $\{a_k,b_k\}$ is a sink-equivalent set.

We first prove that $\conj{X_{k-1}}$ is a directed chordal graph. Without loss of generality, assume that $[n]$ is a perfect elimination ordering of the labels of the vertices of $\conj{X_k}$. Assume for the sake of contradiction that $[n]$ is not also a perfect elimination ordering of the labels of the vertices of $\conj{X_{k-1}}$. Then, there must exist some vertices with labels $i$ and $j$ such that $j$ is directed towards $i$, and vertices with labels $a$ and $b$ with $j \geq b > a \geq i$ such that $b$ is not directed towards $a$. 

Since $[n]$ is a perfect elimination ordering of the labels of the vertices $\conj{X_k}$, it must be the case that $b = b_k$ and $a = a_k$, as this was the only removed edge. Thus, $i \leq a_k$ and $j \geq b_k$. However, note $a_k$ is the smallest vertex with positive outdegree, so it must be the case that $i = a_k$. Furthermore, $b_k$ is the largest label of a vertex that has a directed edge to $a_k$, so it must be the case that $j = b_k$, contradiction.

We now prove that $S_k = \{a_k,b_k\}$ is a sink-equivalent set of $X_k$. Assume for the sake of contradiction that there exists a vertex $i$ such that one of the elements of $S_k$ is directed to $i$, but not the other. Since $[n]$ is a perfect elimination ordering of the labels of the vertices $\conj{X_k}$, if $i \geq b_k$ neither $a_k$ nor $b_k$ can be directed to the vertex with label $i$ in the graph $X_k$. As such, $i < b_k$. 

However, if $a_k$ is not directed to the vertex with label $i$ in $X_k$, then $a_k$ is directed to the vertex with label $i$ in $\conj{X_k}$, implying that $a_k$ is not the smallest vertex with positive outdegree in $\conj{X_k}$, contradiction. The exact same argument can be made with $b_k$, showing that both $a_k$ and $b_k$ are directed toward the vertex with label $i$, contradiction.

\end{proof}
\subsection{Path Graph}
Recall that a path graph $\Path_n$ has $n$ vertices and edge set $\{i \rightarrow (i+1) \mid 1 \leq i \leq n-1 \}$. Furthermore, recall the generalization of the Eulerian numbers mentioned in Section~\ref{sec:Prelim} \cite{Barred}. The following theorem was mentioned:
\begin{theorem*}
Let $\chi_{G}(x)$ be the chromatic polynomial of the undirected graph $G$, and let $A_G(x)$ be the generalized Eulerian polynomial on $G$, as described in Section~\ref{sec:intro}. We have:
\[
\frac{A_G(x)}{(1-x)^{n+1}} = (-1)^n \sum_{m=0}^{\infty} \chi_{\conj{G}}(-m-1) x^{m} 
\]
where $\conj{G}$ is the conjugate of the graph $G$.
\end{theorem*}

This theorem can be restated as one involving directed friends-and-seats graphs, as shown below. We will prove this theorem with the help of Theorems~\ref{thm:pointsquishing} and \ref{thm:chordalSinkEquivalent}.
\begin{theorem}\label{thm:gessel}
Let $X$ be a directed acyclic graph with vertex set $[n]$ and edge set $E(X)$ such that $[n]$ is a perfect elimination ordering of $X$. Then, 
\[
\frac{\ODP(X, \Path_n)}{(1-x)^{n+1}} = (-1)^n \sum_{m = 0}^{\infty} \chi_{\conj{X}}(-m-1) x^m,
\]
where $\chi_{\conj{X}}(m)$ is the chromatic polynomial of the graph $\conj{X}$, ignoring the direction of each edge.

\end{theorem}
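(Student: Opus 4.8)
The plan is to induct, with outer parameter $n$ and inner parameter $k := |E(\conj{X})|$, organizing the step around the chromatic‑polynomial deletion–contraction recurrence. When $k = 0$ we have $X = \Tour_n$ and $\conj{X}$ edgeless, so $\chi_{\conj{X}}(t) = t^n$; since the paper has already identified $\ODP(\Tour_n,\Path_n) = A_n(x)$, the asserted identity is exactly Worpitzky's identity $A_n(x)/(1-x)^{n+1} = \sum_{m\ge 0}(m+1)^n x^m$. For $k \ge 1$, I would apply Theorem~\ref{thm:chordalSinkEquivalent} to produce a graph $X^{+}$ with $E(X^{+}) = E(X)\cup\{e\}$, where $e$ joins a sink‑equivalent pair $\{a,b\}$ of $X^{+}$ and $\conj{X^{+}} = \conj{X} - e$ is again directed chordal, now with $k-1$ edges; the inductive hypothesis then gives the theorem for $X^{+}$.

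The core of the argument is to expand $\ODP(X,\Path_n)$ in terms of $\ODP(X^{+},\Path_n)$ and a $\DFS$‑polynomial on $n-1$ vertices. First, Theorem~\ref{thm:odp} (deleting the edge $e$ from $X^{+}$) yields
\[
\ODP(X,\Path_n) = \ODP(X^{+},\Path_n) - \frac{x-1}{x}\,\ODP(X^{+},\Path_n)_{a\to b},
\]
where I orient $e$ as $a \to b$ in $X^{+}$. Feeding the correction term into Theorem~\ref{thm:pointsquishing} (legitimate since $\{a,b\}$ is sink‑equivalent in $X^{+}$) gives
\[
\ODP(X^{+},\Path_n)_{a\to b} = x\sum_{u=1}^{n-1}\ODP\big(X^{+}-b,\ \Path_n^{\,u,u+1}\big)_{\sigma(a)=u},
\]
where $\Path_n^{\,u,u+1}$ is the point‑squished path (the construction $Y^{uv}$ of the paper with $Y=\Path_n$, $v=u+1$). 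Two observations then finish the reduction: $X^{+}-b = X-b$, because removing $b$ also kills $e$, and $\Path_n^{\,u,u+1}$, after the order‑preserving relabeling of $[n]\setminus\{u+1\}$ onto $[n-1]$, is precisely $\Path_{n-1}$, with the vertex $u$ sent to $u$. Writing $W$ for the relabeled graph $X-b$ on $[n-1]$ and $a'$ for the image of $a$, each summand equals $\ODP(W,\Path_{n-1})_{\sigma(a')=u}$, and since $\sigma(a')$ ranges over all of $[n-1]$ the sum collapses to $\ODP(W,\Path_{n-1})$. Substituting back,
\[
\ODP(X,\Path_n) = \ODP(X^{+},\Path_n) + (1-x)\,\ODP(W,\Path_{n-1}).
\]

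To match this with the right‑hand side, I would set $\Phi_N(H) := (-1)^{N}(1-x)^{N+1}\sum_{m\ge 0}\chi_{H}(-m-1)x^m$ for a graph $H$ on $N$ vertices, so the claim is $\ODP(X,\Path_n) = \Phi_n(\conj{X})$, and observe that the ordinary deletion–contraction $\chi_{H} = \chi_{H-e} - \chi_{H/e}$ transforms into $\Phi_N(H) = \Phi_N(H-e) + (1-x)\,\Phi_{N-1}(H/e)$. Applying this with $H = \conj{X}$ and the edge $e$ (which lies in $\conj{X}$ by construction), and invoking the inductive hypothesis $\ODP(X^{+},\Path_n) = \Phi_n(\conj{X}-e)$, it remains to show $\ODP(W,\Path_{n-1}) = \Phi_{n-1}(\conj{X}/e)$. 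Here $\conj{W}$ is the relabeling of $\conj{X-b} = \conj{X} - b$ (vertex deletion), so it is enough that $b$ is a simplicial vertex of $\conj{X}$, since then deleting $b$ agrees with contracting $e$; and $b$ — the vertex of smallest label with positive indegree in $\conj{X}$, in the choice of Theorem~\ref{thm:chordalSinkEquivalent} — is simplicial because all vertices of smaller label are isolated in $\conj{X}$ and the perfect elimination ordering forces $N_{\conj{X}}(b)$ to be a clique. This simultaneously shows $\conj{W}$ is directed chordal, so the inductive hypothesis applies to $W$ in the $n$‑direction; combining, $\ODP(X,\Path_n) = \Phi_n(\conj{X}-e) + (1-x)\,\Phi_{n-1}(\conj{X}/e) = \Phi_n(\conj{X})$.

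I expect the main obstacle to be precisely the two‑part reduction above: checking that each point‑squished path $\Path_n^{\,u,u+1}$ reassembles into a single copy of $\Path_{n-1}$ with $u \mapsto u$, so the restricted sum telescopes into one $\DFS$‑polynomial on $n-1$ vertices, and then checking that the deleted vertex is simplicial in $\conj{X}$ so that the vertex deletion coincides with the contraction $\conj{X}/e$. The latter is where the perfect elimination ordering and the specific sink‑equivalent edge furnished by Theorem~\ref{thm:chordalSinkEquivalent} do the real work; everything else is bookkeeping with Theorems~\ref{thm:odp} and~\ref{thm:pointsquishing} and with Worpitzky's identity.
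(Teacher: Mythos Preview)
Your proposal is correct and follows essentially the same route as the paper: the same double induction (outer on $n$, inner along the chain of Theorem~\ref{thm:chordalSinkEquivalent}), the same use of Theorems~\ref{thm:odp} and~\ref{thm:pointsquishing} to reduce the correction term to an $\ODP$ on $\Path_{n-1}$, and the same appeal to Worpitzky for the base case. The only substantive difference is packaging: you phrase the chromatic step as deletion--contraction on $\conj{X}$ together with an explicit simpliciality check on the deleted vertex, whereas the paper asserts directly that $\chi_{\conj{X_{i-1}\setminus\{b\}}}(k)$ counts proper colorings of $\conj{X_{i-1}}$ with $a$ and $b$ sharing a color---your version makes transparent exactly why that assertion holds (and, as you note, this is where the specific sink-equivalent pair chosen in the \emph{proof} of Theorem~\ref{thm:chordalSinkEquivalent}, not merely its statement, is needed).
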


\begin{proof}
We first show that for a sink-equivalent set $\{a,b\}$ we have that $\ODP(X,\Path_n)_{a \rightarrow b} = x \ODP(X \setminus \{ b \}, \Path_{n-1})$.
By Theorem~\ref{thm:pointsquishing}, we have 
\[
\ODP(X,\Path_n)_{a \rightarrow b} = x \sum_{(u\rightarrow v) \in E(\Path_i)} \ODP(X \setminus \{ b \},\Path_{n}^{uv})_{\sigma(a) = u}.
\]
First, note that for $u\rightarrow v \in E(\Path_n)$ to be true, it must be the case that $v = u+1$. Thus, $\Path_{n}^{uv}$ has edge set 
\begin{align*}
&\{(1 \rightarrow 2), (2\rightarrow 3), \ldots, (u-1 \rightarrow u)\} \\
&\cup \{(u \rightarrow v+1)\} \\
&\cup \{(v+1 \rightarrow v+2), (v+2 \rightarrow v+3), \ldots, (n-1 \rightarrow n)\}
\end{align*}
which, after relabeling edges by subtracting one from the vertices with labels $v,v+1,\ldots n$, is exactly the edge set of $\Path_{n-1}$. We may therefore say that 
\[
\ODP(X \setminus \{ b \},\Path_{n}^{uv})_{\sigma(a) = u} = \ODP(X \setminus \{ b \},\Path_{n-1})_{\sigma(a) = u}.
\]
Now, we write:
\begin{align*}
\ODP(X,\Path_n)_{a \rightarrow b} &= x \sum_{(u\rightarrow v) \in E(Y)} \ODP(X \setminus \{ b \},\Path_{n-1})_{\sigma(a) = u} \\
&= x \sum_{u \in V(\Path_{n-1})} \ODP(X \setminus \{ b \},\Path_{n-1})_{\sigma(a) = u} \\
&= x \ODP(X \setminus \{ b \}, \Path_{n-1}),
\end{align*}
as desired. 

We now induct on $n$, the number of vertices of $X$. The base case, when $n = 1$, is trivial. Assume that our theorem is true for all $i \leq n$. Then, by Theorem~\ref{thm:chordalSinkEquivalent} pick a sequence of graphs $\Tour_n = X_0, X_1, \ldots, X_k = X$ such that for all $1 \leq i \leq k$ the $E(X_{i-1}) = E(X_i) \cup \{a_i \rightarrow b_i\}$ for some sink-equivalent set $\{a_i \rightarrow b_i\}$. For all $1 \leq i \leq k$, we note that
\[
\frac{\ODP(X_i,\Path_n)}{(1-x)^{n+1}} = \frac{\ODP(X_{i-1},\Path_n)}{(1-x)^{n+1}} + \frac{\ODP(X_{i-1},\Path_n)_{a \rightarrow b}}{x(1-x)^n} 
\]
by Theorem~\ref{thm:odp}. We now induct on $i$. The base case $i = 0$ is equivalent to
\[
\frac{\ODP(\Tour_n, \Path_n)}{(1-x)^{n+1}} = \sum_{m = 0}^{\infty} (m+1)^n x^m.
\]
Note that $\ODP(\Tour_n, \Path_n)$ is the Eulerian polynomial $A_n$, and so this statement becomes a well-known statement on Eulerian polynomials \cite{Graham}. Now, for all $1 \leq i \leq k$, we use both of our inductive hypotheses:
\begin{align*}
\frac{\ODP(X_i,\Path_n)}{(1-x)^{n+1}} &= \frac{\ODP(X_{i-1},\Path_n)}{(1-x)^{n+1}} + \frac{\ODP(X_{i-1},\Path_n)_{a \rightarrow b}}{x(1-x)^n} \\
&= (-1)^n \sum_{m = 0}^{\infty} \chi_{\conj{X_{i-1}}}(m) x^m + \frac{\ODP(X_{i-1} \setminus \{b\},\Path_{n-1})}{(1-x)^n} \\
&= (-1)^n \sum_{m = 0}^{\infty} \left(\chi_{\conj{X_{i-1}}}(-m-1) - \chi_{\conj{X_{i-1} \setminus \{b\}}}(-m-1)  \right) x^m. 
\end{align*}
However, note that for any positive integer $k$, the value $\chi_{\conj{X_{i-1} \setminus \{b\}}}(k)$ is exactly the number of ways to color the graph $\conj{X_{i-1}}$ with $k$ colors such that $a$ and $b$ share the same color but no two other neighboring vertices share the same color. This is also $\chi_{\conj{X_{i-1}}}(k) - \chi_{\conj{X_{i}}}(k)$. Thus:
\[
(-1)^n \sum_{m = 0}^{\infty} \left(\chi_{\conj{X_{i-1}}}(-m-1) - \chi_{\conj{X_{i-1} \setminus \{b\}}}(-m-1)  \right) x^m = (-1)^n \sum_{m = 0}^{\infty} \chi_{\conj{X_{i}}}(-m-1) x^m 
\]
which proves our inductive step.
\end{proof}

\subsection{Cycle Graph}
Cyclic Eulerian numbers $C(n,m)$ are count the number of permutations $\sigma$ on $[n]$ with $m$ descents, where a descent is also counted if $\sigma(n) > \sigma(1)$. Just like their Eulerian counterparts, their properties have been extensively studied \cite{Cellini}. We may generalize these numbers using $G$-cyclic-descents for any graph $G$, which are very similar to their $G$-descent counterparts. We study this generalization under the formulation of directed friends-and-seats graphs.

Recall that a directed cycle graph $\Cycle_n$ has $n$ vertices labeled on $[n]$ and edge set $\{i \rightarrow (i+1) \mid 1 \leq i \leq n-1 \} \cup \{n \rightarrow 1\}$. Our first theorem considers the equivalent statement of Worpitzky's identity for cyclic Eulerian numbers, using directed friends-and-seats graphs:
\begin{theorem}\label{thm:cycle-init}
For all positive integers $n$, we have that
\[
\frac{\ODP(\Tour_n, \Cycle_n)}{(1-x)^{n}} = n \sum_{m = 0}^{\infty} m^{n-1} x^m,
\]
where, by convention, $0^0 = 1$.
\end{theorem}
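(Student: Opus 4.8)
The plan is to identify $\ODP(\Tour_n,\Cycle_n)$ with the cyclic Eulerian polynomial $\sum_{\sigma\in S_n}x^{\cdes(\sigma)}$, then to evaluate that polynomial by an explicit $n$-to-one surjection onto $S_{n-1}$, and finally to feed the result into the classical Worpitzky identity for ordinary Eulerian numbers — the identity appearing as the base case $i=0$ inside the proof of Theorem~\ref{thm:gessel}. The case $n=1$ is degenerate: $\Cycle_1$ is a single looped vertex, $\DFS(\Tour_1,\Cycle_1)$ has one vertex of outdegree $0$, so $\ODP(\Tour_1,\Cycle_1)=1$, and $\tfrac{1}{1-x}=\sum_{m\ge0}x^m=1\cdot\sum_{m\ge0}m^{0}x^m$ using the convention $0^0=1$; I would handle this first and assume $n\ge2$ from then on.

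For $n\ge2$, I would first invoke Corollary~\ref{cor:equal} to replace $\ODP(\Tour_n,\Cycle_n)$ by $\ODP(\Cycle_n,\Tour_n)$ and then read off outdegrees in $\DFS(\Cycle_n,\Tour_n)$. Since $E(\Cycle_n)=\{i\to i+1:1\le i\le n-1\}\cup\{n\to1\}$ and $\Tour_n$ contains $\sigma(a)\to\sigma(b)$ exactly when $\sigma(a)>\sigma(b)$, the vertex labeled $\sigma$ has outgoing edges precisely to the vertices $\sigma\circ(i\;i+1)$ with $i\in[n-1]$ and $\sigma(i)>\sigma(i+1)$, together with $\sigma\circ(n\;1)$ when $\sigma(n)>\sigma(1)$ (for $n=2$ the two edges of $\Cycle_2$ induce the same transposition, but only one of the two conditions can hold, so nothing is double counted). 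Hence $\outdeg$ equals the number of cyclic descents $\cdes(\sigma):=|\{i\in[n-1]:\sigma(i)>\sigma(i+1)\}|+[\sigma(n)>\sigma(1)]$, and $\ODP(\Tour_n,\Cycle_n)=\sum_{\sigma\in S_n}x^{\cdes(\sigma)}$.

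The heart of the argument is the identity $\sum_{\sigma\in S_n}x^{\cdes(\sigma)}=nx\,A_{n-1}(x)$, which I would establish with the ``cut at $n$'' map $\Phi\colon S_n\to S_{n-1}$: given $\sigma$, let $q$ be the position with $\sigma(q)=n$, let $w=(\sigma(q+1),\sigma(q+2),\dots,\sigma(q+n))$ with the arguments of $\sigma$ read cyclically modulo $n$ (so that $w_n=\sigma(q)=n$), and set $\Phi(\sigma)=(w_1,\dots,w_{n-1})\in S_{n-1}$. Because $\sigma(q)=n$ is maximal, position $q$ is always a cyclic descent of $\sigma$; the cyclic descents at the remaining positions correspond bijectively, via $p\mapsto p-q\pmod n$, to the ordinary descents of the length-$n$ word $w$, and deleting the terminal letter $n$ destroys no descent, so $\cdes(\sigma)=\des(\Phi(\sigma))+1$. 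For each $w'\in S_{n-1}$ the fibre $\Phi^{-1}(w')$ is exactly the set of the $n$ cyclic rotations of the cyclic word $(w',n)$ — one for each choice of $q$ — and these are pairwise distinct, so $|\Phi^{-1}(w')|=n$. Summing over fibres,
\[
\sum_{\sigma\in S_n}x^{\cdes(\sigma)}=\sum_{w'\in S_{n-1}}n\,x^{\des(w')+1}=nx\sum_{w'\in S_{n-1}}x^{\des(w')}=nx\,A_{n-1}(x).
\]

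To finish, I would recall $A_{n-1}(x)=\ODP(\Tour_{n-1},\Path_{n-1})$ and the Worpitzky identity $\frac{A_{n-1}(x)}{(1-x)^{n}}=\sum_{m\ge0}(m+1)^{n-1}x^m$, giving
\[
\frac{\ODP(\Tour_n,\Cycle_n)}{(1-x)^{n}}=\frac{nx\,A_{n-1}(x)}{(1-x)^{n}}=nx\sum_{m\ge0}(m+1)^{n-1}x^m=n\sum_{m\ge1}m^{n-1}x^m=n\sum_{m\ge0}m^{n-1}x^m,
\]
where the final equality holds for $n\ge2$ because $0^{n-1}=0$. The step I expect to require the most care is the combinatorics of $\Phi$: verifying that it really is $n$-to-one and that the ``$+1$'' in $\cdes(\sigma)=\des(\Phi(\sigma))+1$ is correct — this forced extra descent at $n$ is exactly what disappears when $n=1$, which is why that case must be argued on its own.
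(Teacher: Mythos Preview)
Your proof is correct and follows essentially the same route as the paper: both arguments handle $n=1$ separately, reduce the statement for $n\ge2$ to the identity $\ODP(\Tour_n,\Cycle_n)=nx\,A_{n-1}(x)$ via an $n$-to-one ``rotate so that $n$ sits in a fixed position, then delete it'' map, and finish with the classical Worpitzky identity. The only cosmetic difference is that you first invoke Corollary~\ref{cor:equal} to read outdegree directly as $\cdes(\sigma)$ in $\DFS(\Cycle_n,\Tour_n)$ and package the rotation-plus-deletion as a single map $\Phi$, whereas the paper stays in $\DFS(\Tour_n,\Cycle_n)$ and splits the argument into a rotation-symmetry step followed by a bijection from the slice $\sigma(1)=n$ to $S_{n-1}$.
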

\begin{proof}
We may manually verify that $n = 1$ results in $\frac{1}{1-x} = \sum_{m = 0}^{\infty} x^m$, which is true. For $n > 1$, we will use that 
\[
\frac{\ODP(\Tour_n, \Path_n)}{(1-x)^{n+1}} = \sum_{m = 0}^{\infty} (m+1)^n x^m,
\]
which, as previously mentioned, is a well-known statement on Eulerian polynomials \cite{Graham}. As such, for all $n \geq 1$, the theorem statement is equivalent to 
\[
\ODP(\Tour_{n}, \Cycle_{n}) = nx\ODP(\Tour_{n-1}, \Path_{n-1}).
\]
Now, for every $1 \leq i \leq n-1$, we prove that 
\[
\ODP(\Tour_{n}, \Cycle_{n})_{\sigma(i) = n} = \ODP(\Tour_{n}, \Cycle_{n})_{\sigma(1) = n}.
\]
We construct a bijection between permutations $\sigma$ of $[n]$ satisfying $\sigma(i) = n$ with outdegree $k$, and permutations $\phi$ of $[n]$ satisfying $\phi(1) = n$ with outdegree $k$. Let $\phi$ be the rotation of $\sigma$, where 
\[
\phi(j) = \sigma(j + i - 1)
\]
for every $1 \leq j \leq n$, where indices are taken $\bmod \: n$ and $0 = n$. Then, any outward edge from $\sigma$ to $\sigma \circ (j \; k)$ has a corresponding edge from $\phi$ to $\phi \circ (j + i - 1 , k + i - 1)$ (where again indices are taken $\bmod \: n$ and $0 = n$.) Therefore,
\[
\ODP(\Tour_{n}, \Cycle_{n}) = \sum_{i=1}^{n} \ODP(\Tour_{n}, \Cycle_{n})_{\sigma(i) = n} = n \ODP(\Tour_{n}, \Cycle_{n})_{\sigma(1) = n}.
\]
We now construct a bijection between permutations $\sigma$ on $[n]$ satisfying $\sigma(1) = n$ with outdegree $k$ in $\ODP(\Tour_{n}, \Cycle_{n})$, and permutations $\phi$ on $[n-1]$ with outdegree $k-1$, which would show that 
\[
n \ODP(\Tour_{n}, \Cycle_{n})_{\sigma(1) = n} = nx\ODP(\Tour_{n-1}, \Path_{n-1}),
\]
and prove the theorem. To do this, take $\phi(i) = \sigma(i-1)$ for $2 \leq i \leq n$. Consider any edge $\sigma \rightarrow (i \; j)$. If $i = n$, then $j$ must be $\sigma^{-1}(2)$, and there is exactly one edge; clearly $j$ cannot be $n$. Otherwise, there is a corresponding directed edge from $\phi$ to $\phi \rightarrow (i \; j)$, and $\phi$ has one less outdegree than $\sigma$.
\end{proof}
Now, we may prove the cyclic equivalent of Theorem~\ref{thm:gessel}.
\begin{theorem}\label{thm:cycle-gessel}
Let $X$ be a directed acyclic graph with vertex set $[n]$ and edge set $E(X)$ such that $[n]$ is a perfect elimination ordering of $X$. Then, 
\[
\frac{\ODP(X, \Cycle_n)}{(1-x)^{n}} = (-1)^n n \sum_{m = 0}^{\infty} \frac{\chi_{\conj{X}}(-m)}{m} x^m.
\]
By symmetry, the chromatic polynomial $\chi_{\conj{X}}(x)$ is always divisible by $x$. Thus, when $m = 0$, the value of $\frac{\chi_{\conj{X}}(-m)}{m}$ is defined as the value of the polynomial $\frac{\chi_{\conj{X}}(-m)}{m}$ evaluated at $m = 0$.
\end{theorem}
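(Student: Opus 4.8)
The plan is to mimic the proof of Theorem~\ref{thm:gessel} as closely as possible, using the same two-layer induction (an outer induction on $n$, the number of vertices, and an inner induction on $i$, walking along the sequence $\Tour_n = X_0, X_1, \ldots, X_k = X$ of graphs supplied by Theorem~\ref{thm:chordalSinkEquivalent}). The base case of the inner induction, $i=0$, is precisely Theorem~\ref{thm:cycle-init}, since $\conj{\Tour_n}$ has no edges and so $\chi_{\conj{\Tour_n}}(x) = x^n$, giving $(-1)^n n \sum_m \tfrac{(-m)^n}{m} x^m = n \sum_m m^{n-1} x^m$, matching Theorem~\ref{thm:cycle-init}. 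So the real content is the inner inductive step.

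First I would establish the cycle-analogue of the "point-squishing" identity from the proof of Theorem~\ref{thm:gessel}: for a sink-equivalent set $\{a,b\}$ of $X$ with $a \rightarrow b \in E(X)$,
\[
\ODP(X, \Cycle_n)_{a \rightarrow b} = x\, \ODP(X \setminus \{b\}, \Cycle_{n-1}).
\]
This should follow from Theorem~\ref{thm:pointsquishing} exactly as in Theorem~\ref{thm:gessel}: the key computation is that for each edge $u \rightarrow v$ of $\Cycle_n$ (so $v = u+1 \bmod n$), the multigraph $\Cycle_n^{uv}$, after deleting vertex $v$ and relabeling, is isomorphic to $\Cycle_{n-1}$ — contracting one edge of a directed cycle on $n$ vertices yields a directed cycle on $n-1$ vertices. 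Then summing $\ODP(X\setminus\{b\}, \Cycle_{n-1})_{\sigma(a)=u}$ over all $u \in V(\Cycle_{n-1})$ collapses to $\ODP(X\setminus\{b\}, \Cycle_{n-1})$. Combining this with Theorem~\ref{thm:odp} gives
\[
\frac{\ODP(X_i, \Cycle_n)}{(1-x)^{n}} = \frac{\ODP(X_{i-1}, \Cycle_n)}{(1-x)^{n}} + \frac{\ODP(X_{i-1}\setminus\{b\}, \Cycle_{n-1})}{(1-x)^{n-1}}.
\]
Applying the inner inductive hypothesis to the first term (with $X_{i-1}$ on $n$ vertices) and the outer inductive hypothesis to the second term (with $X_{i-1}\setminus\{b\}$ on $n-1$ vertices), the right-hand side becomes
\[
(-1)^n n \sum_{m=0}^\infty \frac{\chi_{\conj{X_{i-1}}}(-m)}{m} x^m + (-1)^{n-1}(n-1)\sum_{m=0}^\infty \frac{\chi_{\conj{X_{i-1}\setminus\{b\}}}(-m)}{m-1} \cdot(\text{shift})\, x^m,
\]
and the task is to show this equals $(-1)^n n \sum_m \tfrac{\chi_{\conj{X_i}}(-m)}{m} x^m$.

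The hard part will be the chromatic-polynomial bookkeeping, which is genuinely more delicate than in Theorem~\ref{thm:gessel} because of the extra factor of $n$ and the division by $m$. In Theorem~\ref{thm:gessel} one used the clean identity $\chi_{\conj{X_{i-1}}}(k) - \chi_{\conj{X_i}}(k) = \chi_{\conj{X_{i-1}\setminus\{b\}}}(k)$ (deletion–contraction, since $\conj{X_i} = \conj{X_{i-1}}$ with the edge $a$–$b$ added). Here I expect to need both that identity and the relation between $\chi$ of a graph on $n-1$ vertices and the prefactor $n-1$ versus $n$; the generating-function shift coming from $\Cycle_{n-1}$ living one dimension down (the $(1-x)^{n-1}$ versus $(1-x)^n$) must be reconciled with the $\tfrac1m$ versus $\tfrac1{m-1}$ normalization. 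I would handle this by carefully tracking the relationship between $\sum_m \tfrac{\chi(-m)}{m}x^m$ over the $(1-x)^{n-1}$ denominator after multiplying by $(1-x)$, using that $\chi_{\conj{X_{i-1}\setminus\{b\}}}$ is divisible by $x$ and rewriting $\tfrac{\chi(-m)}{m-1}$ appropriately; the miracle that should make everything cancel is again deletion–contraction together with the combinatorial meaning of $\chi_{\conj{X_{i-1}\setminus\{b\}}}(k)$ as colorings of $\conj{X_{i-1}}$ with $a,b$ forced to the same color. If a direct manipulation of these series proves unwieldy, an alternative is to clear denominators and prove the resulting polynomial identity in $x$ by comparing it, coefficient by coefficient, against a known Worpitzky-type expansion for $\ODP(X,\Cycle_n)$ obtained by first relating $\Cycle_n$ to $\Path_n$ via an edge-addition (Theorem~\ref{thm:odp} applied to the single edge $n\rightarrow 1$) and invoking Theorem~\ref{thm:gessel}.
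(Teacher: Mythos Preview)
Your overall architecture matches the paper's exactly (double induction, Theorem~\ref{thm:chordalSinkEquivalent} for the sequence $X_0,\ldots,X_k$, Theorem~\ref{thm:cycle-init} as the $i=0$ base case, deletion--contraction at the end). But there is a genuine error in your point-squishing identity, and it is the source of all the ``delicate bookkeeping'' you anticipate later.

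You claim $\ODP(X,\Cycle_n)_{a\to b} = x\,\ODP(X\setminus\{b\},\Cycle_{n-1})$. This is off by a factor of $\tfrac{n}{n-1}$. The sum in Theorem~\ref{thm:pointsquishing} runs over the edges of $\Cycle_n$, and there are $n$ of them, not $n-1$. Each contraction $\Cycle_n^{uv}$ is indeed a copy of $\Cycle_{n-1}$, but $\Cycle_{n-1}$ has only $n-1$ vertices, so when you ``sum $\ODP(X\setminus\{b\},\Cycle_{n-1})_{\sigma(a)=u}$ over all $u$'' you are summing $n$ terms, not $n-1$. By the rotational symmetry of $\Cycle_{n-1}$ each such term equals $\tfrac{1}{n-1}\ODP(X\setminus\{b\},\Cycle_{n-1})$, so the correct identity is
\[
\ODP(X,\Cycle_n)_{a\to b} \;=\; \frac{nx}{n-1}\,\ODP(X\setminus\{b\},\Cycle_{n-1}).
\]
Once you have this factor, the inductive step is not delicate at all: the outer hypothesis on $n-1$ vertices contributes a prefactor $(-1)^{n-1}(n-1)$, the $\tfrac{n}{n-1}$ cancels the $(n-1)$, and you are left with exactly $(-1)^{n-1}n\sum_m \tfrac{\chi_{\conj{X_{i-1}\setminus\{b\}}}(-m)}{m}x^m$, which combines with the first term via the same deletion--contraction identity $\chi_{\conj{X_{i-1}}} - \chi_{\conj{X_{i-1}\setminus\{b\}}} = \chi_{\conj{X_i}}$ used in Theorem~\ref{thm:gessel}. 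There is no shift, no $\tfrac{1}{m-1}$, and no need for the alternative route through $\Path_n$.
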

\begin{proof}
This proof follows a very similar path to the one seen in Theorem~\ref{thm:gessel}. We show that for a sink-equivalent set $\{a,b\}$ and an integer $n \geq 2$ we have 
\[
\ODP(X,\Cycle_n)_{a \rightarrow b} = \left(\frac{nx}{n-1}\right)  \ODP(X \setminus \{ b \}, \Cycle_{n-1}).
\]
By Theorem~\ref{thm:pointsquishing}, we have 
\[
\ODP(X,\Cycle_n)_{a \rightarrow b} = x \sum_{(u\rightarrow v) \in E(\Cycle_i)} \ODP(X \setminus \{ b \},\Cycle_{n}^{uv})_{\sigma(a) = u}.
\]
First, note that for $u\rightarrow v \in E(\Cycle_n)$ to be true, it must be the case that $v = u+1$, or $u = n$ and $v = 1$. In the case where $v = u+1$, then $\Cycle_{n}^{uv}$ has edge set 
\begin{align*}
&\{(1 \rightarrow 2), (2\rightarrow 3), \ldots, (u-1 \rightarrow u)\} \\
&\cup \{(u \rightarrow v+1)\} \\
&\cup \{(v+1 \rightarrow v+2), (v+2 \rightarrow v+3), \ldots, (n-1 \rightarrow n), (n \rightarrow 1)\}.
\end{align*}
If we relabel edges by subtracting one from the labels $v,v+1,\ldots, n$, this is exactly the edge set of $\Cycle_{n-1}$. If $u = n$ and $v = 1$, then $\Cycle_{n}^{uv}$ has edge set 
\[
\{(i \rightarrow i+1) \mid 2 \leq i \leq n-1 \} \cup \{(n \rightarrow 2)\},
\]
which also is the edge set of $\Cycle_{n-1}$ after relabeling edges by subtracting one from the labels of all vertices. We may therefore say that 
\[
\ODP(X \setminus \{ b \},\Cycle_{n}^{uv})_{\sigma(a) = u} = \ODP(X \setminus \{ b \},\Cycle_{n-1})_{\sigma(a) = u}
\]
for $1 \leq u \leq n-1$, and 
\[
\ODP(X \setminus \{ b \},\Cycle_{n}^{uv})_{\sigma(a) = u} = \ODP(X \setminus \{ b \},\Cycle_{n-1})_{\sigma(a) = n-1}
\]
when $u = n$. We may relabel the vertices of a cycle by adding or subtracting any arbitrary integer from the label of each vertex, $\pmod{n}$ (where the vertex with label $0$ is relabeled to have label $n$). As such, we may say that
\[
\ODP(X \setminus \{ b \},\Cycle_{n-1})_{\sigma(a) = u} = \ODP(X \setminus \{ b \},\Cycle_{n-1})_{\sigma(a) = 1}
\]
for all $1 \leq u \leq n$. Now, we may simplify to find that
\begin{align*}
\ODP(X,\Cycle_n)_{a \rightarrow b} &= x \sum_{(u\rightarrow v) \in E(Y)} \ODP(X \setminus \{ b \},\Cycle_{n-1})_{\sigma(a) = u} \\
&= nx  \ODP(X \setminus \{ b \},\Cycle_{n-1})_{\sigma(a) = 1} \\
&= \frac{nx}{n-1} \left( \sum_{i = 1}^{n-1} \ODP(X \setminus \{ b \},\Cycle_{n-1})_{\sigma(a) = i} \right) \\
&= \left(\frac{nx}{n-1}\right)  \ODP(X \setminus \{ b \}, \Cycle_{n-1}),
\end{align*}
as desired. 

Our inductive step is also very similar to the inductive step found in the proof of Theorem~\ref{thm:gessel}. We induct on $n$, the number of vertices of $X$. $n = 1$ is trivial. Assume that our theorem is true for all $i \leq n$. Then, by Theorem~\ref{thm:chordalSinkEquivalent} pick a sequence of graphs $\Tour_n = X_0, X_1, \ldots, X_k = X$ such that for all $1 \leq i \leq k$ the $E(X_{i-1}) = E(X_i) \cup \{a_i \rightarrow b_i\}$ for some sink-equivalent set $\{a_i \rightarrow b_i\}$. For all $1 \leq i \leq k$, we note that
\[
\frac{\ODP(X_i,\Cycle_n)}{(1-x)^n} = \frac{\ODP(X_{i-1},\Cycle_n)}{(1-x)^n} + \frac{\ODP(X_{i-1},\Cycle_n)_{a \rightarrow b}}{x(1-x)^{n-1}} 
\]
by Theorem~\ref{thm:odp}. We now induct on $i$. The base case $i = 0$ is equivalent to
\[
\frac{\ODP(\Tour_n, \Cycle_n)}{(1-x)^n} = n \sum_{m = 0}^{\infty} m^{n-1} x^m,
\]
which is the statement of theorem \ref{thm:cycle-init}. Now, for all $1 \leq i \leq k$, we use both of our inductive hypotheses:
\begin{align*}
\frac{\ODP(X_i,\Cycle_n)}{(1-x)^n} &= \frac{\ODP(X_{i-1},\Cycle_n)}{(1-x)^n} + \frac{\ODP(X_{i-1},\Cycle_n)_{a \rightarrow b}}{x(1-x)^{n-1}}  \\
&= (-1)^n n \sum_{m = 0}^{\infty} \frac{\chi_{\conj{X_{i-1}}}(-m)}{m} x^m + \frac{\ODP(X_{i-1} \setminus \{b\},\Path_{n-1})}{(1-x)^{n-1}} \\
&= (-1)^n n \sum_{m = 0}^{\infty} \frac{\chi_{\conj{X_{i-1}}}(-m)}{m} x^m  \\
&\quad + (-1)^{n-1} (n-1) \left( \frac{nx}{n-1} \right) \sum_{m=0}^{\infty} \frac{\chi_{\conj{X_{i-1} \setminus \{b\}}}(-m)}{m} x^m \\
&= (-1)^n n \sum_{m = 0}^{\infty} \frac{1}{m}\left(\chi_{\conj{X_{i-1}}}(-m) - \chi_{\conj{X_{i-1} \setminus \{b\}}}(-m)  \right) x^m. 
\end{align*}
However, note that for any positive integer $k$, the value $\chi_{\conj{X_{i-1} \setminus \{b\}}}(k)$ is exactly the number of ways to color the graph $\conj{X_{i-1}}$ with $k$ colors such that $a$ and $b$ share the same color but no two other neighboring vertices share the same color. This is also exactly $\chi_{\conj{X_{i-1}}}(k) - \chi_{\conj{X_{i}}}(k)$. Thus:
\[
(-1)^n n\sum_{m = 0}^{\infty} \frac{1}{m}\left(\chi_{\conj{X_{i-1}}}(-m) - \chi_{\conj{X_{i-1} \setminus \{b\}}}(-m)  \right) x^m  = (-1)^n n\sum_{m = 0}^{\infty} \frac{1}{m} \chi_{\conj{X_{i}}}(-m) x^m,
\]
which proves our inductive step.
\end{proof}
We may reformulate Theorem~\ref{thm:cycle-gessel} without mentioning directed friends-and-seats graphs in a similar way to our reformulation of Theorem~\ref{thm:gessel}. For any permutation $\sigma$ of $[n]$, define the number of $G$-cyclic-descents of $\sigma$, denoted $\cdes_G(\sigma)$ to be the number of positive integers $1 \leq i \leq n$ that satisfy the following conditions:
\begin{itemize}
\item The permutation $\sigma$ satisfies that $\sigma(i) > \sigma(i+1)$, indices taken modulo $n$ with $\sigma(0) = \sigma(n)$.
\item The vertices with labels $\sigma(i)$ and $\sigma(i+1)$ are neighbors in $G$.
\end{itemize} 
The \textit{generalized cyclic Eulerian polynomial} $C_G(x)$ satisfies
\[
C_G(x) = \sum_{\sigma \in S_n} x^{\cdes_G(\sigma)}.
\]
Now, using Theorem~\ref{thm:cycle-gessel} we may write:
\begin{corollary}
Let $\chi_{G}(x)$ be the chromatic polynomial of the undirected graph $G$. We have:
\[
\frac{C_G(x)}{(1-x)^{n}} = (-1)^n n \sum_{m = 0}^{\infty} \frac{\chi_{\conj{X}}(-m)}{m} x^m.
\]
\end{corollary}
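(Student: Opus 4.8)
The plan is to deduce the Corollary from Theorem~\ref{thm:cycle-gessel} by showing that the two sides of the two statements already coincide, once we identify the graph-theoretic quantities correctly. First I would observe that the Corollary's statement must intend $\conj{G}$ to be read as the complement of a \emph{directed} acyclic orientation of $G$; concretely, given an undirected graph $G$ on $[n]$, pick any labeling so that $G$ becomes a labeled acyclic graph $X$ with $[n]$ a perfect elimination ordering of $X$ — this is possible precisely when $G$ is chordal, which is the standing hypothesis inherited from the theorem being generalized. Then $\conj{X}$ (the directed complement inside $\Tour_n$) has the same underlying undirected graph as $\conj{G}$, so $\chi_{\conj{X}}(x) = \chi_{\conj{G}}(x)$ as polynomials, and the right-hand sides of the Corollary and of Theorem~\ref{thm:cycle-gessel} are literally equal.

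The remaining task is to identify the left-hand sides: I must show $\ODP(X,\Cycle_n) = C_G(x)$, i.e., that $\ODP(X,\Cycle_n) = \sum_{\sigma \in S_n} x^{\cdes_G(\sigma)}$. This is the analogue of the computation in the excerpt that $\ODP(\Tour_n,\Path_n) = A_n(x)$ and its $G$-generalization implicit in Theorem~\ref{thm:gessel}. The key step is a vertex-by-vertex count: for a vertex $v$ of $\DFS(X,\Cycle_n)$ with label $\pi_v$, the outgoing edges correspond to swaps $\pi_v \circ (i\; i{+}1)$ (indices mod $n$, using that $\Cycle_n$ has edge set $\{i \to i{+}1\} \cup \{n \to 1\}$), and such a swap yields an edge exactly when $\pi_v(i) \to \pi_v(i{+}1)$ is an edge of $X$. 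Since $[n]$ is a perfect elimination ordering and hence an acyclic labeling, $\pi_v(i) \to \pi_v(i{+}1) \in E(X)$ forces $\pi_v(i) > \pi_v(i{+}1)$ and additionally requires $\pi_v(i), \pi_v(i{+}1)$ to be adjacent in $G$; conversely adjacency in $G$ together with $\pi_v(i) > \pi_v(i{+}1)$ gives the edge (here I use that $X$ is an orientation of $G$ with all edges pointing from larger to smaller label). Hence $\outdeg(v)$ equals the number of $i$ (cyclically) with $\pi_v(i) > \pi_v(i{+}1)$ and $\{\pi_v(i),\pi_v(i{+}1)\}$ an edge of $G$, which is exactly $\cdes_G(\pi_v)$. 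Summing $x^{\outdeg(v)}$ over all $n!$ vertices gives $C_G(x)$, completing the identification.

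I would present this as a short two-paragraph proof: first the setup reducing $G$ to a perfectly-eliminating labeled orientation $X$ and noting $\chi_{\conj{X}} = \chi_{\conj{G}}$, then the outdegree computation $\outdeg(v) = \cdes_G(\pi_v)$ and the conclusion $\ODP(X,\Cycle_n) = C_G(x)$, after which Theorem~\ref{thm:cycle-gessel} finishes it verbatim. The main obstacle — really the only subtlety — is the bookkeeping around the wraparound edge $n \to 1$ of $\Cycle_n$ and making sure it contributes the term comparing $\pi_v(n)$ with $\pi_v(1)$, matching the modular convention $\sigma(0) = \sigma(n)$ in the definition of $\cdes_G$; I'd want to state the correspondence of outgoing edges carefully enough that this is transparent. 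A secondary point worth a sentence is justifying that every chordal $G$ admits such a perfect-elimination labeled orientation (standard: reverse a perfect elimination ordering of $G$ and orient all edges from higher to lower), so that the Corollary is non-vacuous and genuinely covers exactly the chordal case, paralleling the hypothesis in Theorem~\ref{thm:gessel}.
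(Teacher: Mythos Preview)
Your proposal is correct and matches the paper's approach: the paper presents the Corollary as an immediate reformulation of Theorem~\ref{thm:cycle-gessel} once one identifies $C_G(x)$ with $\ODP(X,\Cycle_n)$, and your outdegree computation spells out exactly that identification. One small remark: your description of the outgoing edges at $\pi_v$ is literally the computation in $\DFS(\Cycle_n,X)$ rather than $\DFS(X,\Cycle_n)$, so you should either invoke Corollary~\ref{cor:equal} or pass to $\pi_v^{-1}$, but this is the same harmless swap the paper makes in its own motivating example.
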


\section{Acknowledgments}
I am grateful to Tanya Khovanova, who introduced me to Eulerian numbers and mentored me throughout this project. Thanks also to Ira Gessel for consulting on this project.  I am also indebted to the MIT PRIMES-USA program for creating such a rare and amazing math research opportunity. 
\printbibliography
\end{document}